\numberwithin{equation}{section}
\newtheorem{theorem}{Theorem}[section]
\newtheorem{proposition}[theorem]{Proposition}
\newtheorem{lemma}[theorem]{Lemma}
\newtheorem{corollary}[theorem]{Corollary}
\theoremstyle{definition}
\newtheorem{remark}[theorem]{Remark}
\newtheorem{definition}[theorem]{Definition}
\newcommand{\Ad}{\operatorname{Ad}}
\newcommand{\spin}{\operatorname{spin}}
\newcommand{\RP}{{\mathbb R}{\rm P}}
\newcommand{\HP}{{\mathbb H}{\rm P}}
\renewcommand{\tilde}{\widetilde}
\newcommand{\largewedge}{\mbox{\Large $\wedge$}}
\begin{document}

\title{Classification of first order sesquilinear forms}
\author{
Matteo Capoferri
\thanks{MC:
Department of Mathematics,
University College London,
Gower Street,
London WC1E~6BT,
\ UK;
matteo.capoferri.16@ucl.ac.uk,
\url{https://www.ucl.ac.uk/\~ucahmca/};
}
\and
Nikolai Saveliev\thanks{NS:
Department of Mathematics,
University of Miami,
PO Box 249085
Coral Gables,
FL 33124,
USA;
saveliev@math.miami.edu,
\url{http://www.math.miami.edu/\~saveliev/};
NS was supported by
LMS grant 21420,  EPSRC grant EP/M000079/1, and the Simons Collaboration Grant 426269.
}
\and
Dmitri Vassiliev\thanks{DV:
Department of Mathematics,
University College London,
Gower Street,
London WC1E~6BT,
UK;
d.vassiliev@ucl.ac.uk,
\url{https://www.ucl.ac.uk/\~ucahdva/};
DV was supported by EPSRC grant EP/M000079/1.
}}

\renewcommand\footnotemark{}


\maketitle
\begin{abstract}
A natural way to obtain a system of
partial differential equations on a manifold is to vary a
suitably defined sesquilinear form.
The sesquilinear forms we study are Hermitian forms acting on sections of the trivial $\mathbb{C}^n$-bundle over a smooth $m$-dimensional manifold without boundary. More specifically, we are concerned with first order sesquilinear forms, namely, those generating first order systems.
Our goal is to classify such forms up to $GL(n,\mathbb{C})$ gauge equivalence.
We achieve this classification in the special case of $m=4$ and $n=2$
by means of geometric and topological invariants (e.g.\ Lorentzian metric, spin/spin$^c$ structure, electromagnetic covector potential)
naturally contained within the sesquilinear form -- a purely analytic object.
Essential to our approach is the
interplay of techniques from analysis, geometry, and topology.

\

{\bf Keywords:}
sesquilinear forms, first order systems, gauge transformations, $\operatorname{spin}^c$ structure.

\

{\bf MSC classes:}
primary 35F35; secondary 35L40, 35R01, 53C27.










\end{abstract}

\newpage

\tableofcontents

\newpage

\section{Introduction}
\label{Introduction}

In this paper we study sesquilinear forms of a particular type,
namely, those that generate first order systems
of partial differential equations on manifolds.

In order to provide motivation for our analysis,
let us first discuss some basic facts from linear algebra
in finite dimension.

Working in a $k$-dimensional complex vector space $V$, consider an Hermitian form
\[
S:V\times V\to\mathbb{C}, \quad (u,v)\mapsto S(u,v).
\]
Here $S$ is assumed to be antilinear in the first argument and linear in the second.
Variation of the real-valued action $S(v,v)$
produces the following linear field equation for $v$:
\begin{equation}
\label{null space of the of sesquilinear form}
S(u,v)=0,\qquad\forall u\in V.
\end{equation}

Suppose now that our vector space $V$ is equipped with an additional structure,
an inner product $\langle\,\cdot\,,\,\cdot\,\rangle$.
Then the sesquilinear form $S$ and  inner product $\langle\,\cdot\,,\,\cdot\,\rangle$
uniquely define a self-adjoint linear operator $L:V\to V$ via the formula
\begin{equation}
\label{relation between form and operator}
S(u,v)=\langle u,Lv\rangle,\qquad\forall u,v\in V.
\end{equation}
The argument also works the other way round: a self-adjoint linear operator
uniquely defines  an Hermitian sesquilinear form via formula \eqref{relation between form and operator}.
Thus, in an inner product space the concepts of
Hermitian sesquilinear form
and
self-adjoint linear operator
are equivalent.

Given a linear operator $L$, we can consider the linear equation
\begin{equation}
\label{null space of the of operator}
Lv=0.
\end{equation}
If $S$ and $L$ are related as in \eqref{relation between form and operator},
then equations
\eqref{null space of the of sesquilinear form}
and 
\eqref{null space of the of operator}
are equivalent.

It may seem that there is no point in working with Hermitian sesquilinear forms
and that one can work with self-adjoint linear operators instead, which would be easier for practical purposes.
However, there is a point because the statement regarding the equivalence
of linear equations
\eqref{null space of the of sesquilinear form}
and 
\eqref{null space of the of operator}
is based on the use of an inner product.
The concept of an Hermitian sesquilinear form
is more fundamental than the concept of a self-adjoint linear operator
in that it does not require an inner product for its definition.
One can formulate and study the linear equation 
\eqref{null space of the of sesquilinear form}
without introducing an inner product.

In the class of problems we are interested in, the above toy
model translates into the study of partial differential equations
on manifolds in a setting when there is no natural definition of
an inner product invariant under relevant gauge transformations.
Such a situation arises, for instance, when dealing with physically meaningful problems in 4-dimensional Lorentzian spacetime, see Sections~\ref{Sesquilinear forms vs linear operators} and~\ref{Applications}. Fully relativistic equations of mathematical physics are not always associated with a natural inner product,
not even an indefinite non-degenerate one.

This is why studying sesquilinear forms and their classification is an interesting mathematical problem with relevant applications. More precisely, the goal of our paper is to study and classify sesquilinear forms acting on compactly supported smooth sections of the trivial $\mathbb{C}^n$-bundle over a smooth manifold $M$, whose coordinate representation involves the sections themselves and their first derivatives but no products of first derivatives. Adopting a non-canonical approach, we ask the question: when do two sesquilinear forms written in their coordinate representation correspond to the same abstract sesquilinear form? In other words, we are interested in establishing when two sesquilinear forms can be obtained one from the other by a pointwise change of basis in the fibre depending smoothly on the base point. As it turns out, this problem can be solved thanks to the interplay of techniques from algebraic topology, geometry and analysis of partial differential equations.


\

Our paper is structured as follows.

In Section~\ref{First order sesquilinear forms} we provide a precise definition of the class of sesquilinear forms we work with using the language of analysis of partial differential equations. 

In Section~\ref{Statement of the problem} we formulate the mathematical problem we want to address, namely, the classification of first order sesquilinear forms, distinguishing the two different types of classification we will be looking at. 

Section~\ref{Main results} contains a brief description of the main result of the paper: our classification theorems in dimension four.

Sections~\ref{Invariant objects encoded within sesquilinear forms} and~\ref{Transition from analysis to topology} comprise preparatory work towards the proof of the main theorems. In Section~\ref{Invariant objects encoded within sesquilinear forms} we analyse properties of sesquilinear forms, identifying geometric and topological objects naturally encoded in their analytic definition. In Section~\ref{Transition from analysis to topology} we recast our analytic definition of equivalence of sesquilinear forms in a purely algebraic topological fashion, proving the equivalence of the two formulations. 

Our main theorems are proved in Section~\ref{Proofs of main theorems}.

Section~\ref{The 3-dimensional Riemannian case} is concerned with a similar analysis in dimension three, under suitable additional conditions. We also examine two explicit examples.

In Section~\ref{Sesquilinear forms vs linear operators} we revisit the sesquilinear forms vs linear operators issue in the context of our main results. 

In conclusion, in Section~\ref{Applications} we briefly mention some physically meaningful applications of our results.

The paper is complemented by
Appendix~\ref{The concepts of principal and subprincipal symbol}
where we explain the relation between the traditional definitions
of symbols of (pseudo)differential
operators and our definitions for sesquilinear forms.

\section{First order sesquilinear forms}
\label{First order sesquilinear forms}

Let $M$ be a real connected smooth $m$-manifold without boundary, not necessarily compact.
Local coordinates on $M$ will be denoted by $x^\alpha$, $\alpha=1,\dots,m$.

We will be working with compactly supported smooth
functions $u:M\to\mathbb{C}^n$.
Such functions can be thought of as
sections of the trivial
$\mathbb{C}^n$-bundle over $M$
or as
$n$-columns of smooth complex-valued scalar fields.
They form an (infinite-dimensional) vector space
$C_0^\infty(M,\mathbb{C}^n)$.

\begin{definition}
\label{definition of sequilinear form}
A first order sesquilinear form is a functional
\begin{equation}
\label{definition of sequilinear 1}
S(u,v):=\int_M
\left[
u^*\,\mathbf{A}^\alpha\,v_{x^\alpha}
+
u_{x^\alpha}^*\,\mathbf{B}^\alpha\,v
+
u^*\,\mathbf{C}\,v
\right]dx\,,
\qquad u,v\in C_0^\infty(M,\mathbb{C}^n),
\end{equation}
where
$\mathbf{A}^\alpha(x)$, $\mathbf{B}^\alpha(x)$ and $\mathbf{C}(x)$
are some prescribed smooth complex $n\times n$ matrix-functions,
the subscript $x^\alpha$ indicates partial differentiation,
the star stands for Hermitian conjugation (transposition and complex conjugation)
and $dx=dx^1\dots dx^m$.
We adopt the summation convention
over repeated indices.
\end{definition}

In formula \eqref{definition of sequilinear 1}
the elements of the matrix-function $\mathbf{C}$ are densities,
whereas the elements of the matrix-functions $\mathbf{A}$ are $\mathbf{B}$ are vector densities.
Here and further on we use bold script for density-valued quantities.

Performing integration by parts, one can rewrite the sesquilinear form
\eqref{definition of sequilinear 1} in many different ways.
We define the canonical representation of a first order sesquilinear form as
\begin{equation}
\label{definition of sequilinear 2}
S(u,v)=\int_M
\left[
-\frac i2\,
u^*\,\mathbf{E}^\alpha\,v_{x^\alpha}
+\frac i2
u_{x^\alpha}^*\,\mathbf{E}^\alpha\,v
+
u^*\,\mathbf{F}\,v
\right]dx\,.
\end{equation}
The matrix-functions in
\eqref{definition of sequilinear 1}
and
\eqref{definition of sequilinear 2}
are related by formulae
\[
\mathbf{E}^\alpha
=
i(\mathbf{A}^\alpha-\mathbf{B}^\alpha),
\qquad
\mathbf{F}=\mathbf{C}-\frac12\frac{\partial(\mathbf{A}^\alpha+\mathbf{B}^\alpha)}{\partial x^\alpha}\,.
\]
Recall the well-known fact
that if $\mathbf{w}^\alpha$ is a vector density then
$\partial\mathbf{w}^\alpha/\partial x^\alpha$ is a density,
so elements of the matrix-function $\mathbf{F}(x)$ are densities.

We define the principal, subprincipal and full symbols of the sesquilinear form
\eqref{definition of sequilinear 2}
as
\begin{equation}
\label{definition of density-valued principal symbol}
\mathbf{S}_\mathrm{prin}(x,p)
:=
\mathbf{E}^\alpha(x)\,p_\alpha\,,
\end{equation}
\begin{equation}
\label{definition of density-valued subprincipal symbol}
\mathbf{S}_\mathrm{sub}(x)
:=
\mathbf{F}(x),
\end{equation}
\begin{equation}
\label{definition of density-valued full symbol}
\mathbf{S}_\mathrm{full}(x,p)
:=
\mathbf{S}_\mathrm{prin}(x,p)
+
\mathbf{S}_\mathrm{sub}(x)
\end{equation}
respectively.
Here $p_\alpha$, $\alpha=1,\dots,m$, is the dual variable (momentum)
and all the above symbols are well defined on the cotangent bundle $T^*M$.
It is easy to see that the full symbol
uniquely determines our first order sesquilinear form
and that our sesquilinear form is Hermitian
(that is, $S(u,v)=\overline{S(v,u)}\,$)
if and only if its full symbol is Hermitian.

Establishing a correspondence between a sesquilinear form or a (pseudo)differential operator on the one hand
and a (full) symbol on the other hand is often referred to as \emph{quantisation}.
The argument in the above paragraph shows that first order sesquilinear forms admit
a particularly convenient and natural quantisation.

Further on we work with Hermitian first order sesquilinear forms.

An Hermitian first order sesquilinear form $S(u,v)$ defines a real-valued action
$S(v,v)$.
Variation of this action produces field equations for $v$.
This is a system of $n$ linear scalar first order partial differential equations for $n$ unknown
complex-valued scalar fields.
Our interest in such systems is the motivation for the current paper.

Note that, according to our Definition \ref{definition of sequilinear form},
a first order sesquilinear form does not contain the term
$\,u_{x^\alpha}^*\,
\mathbf{D}^{\alpha\beta}
\,v_{x^\beta}\,$.
The presence of such a term would fundamentally change the corresponding
field equations, making them second order,
whereas we are interested in first order systems.

\begin{definition}
\label{definition of non-degeneracy}
We say that the sesquilinear form $S$ is \emph{non-degenerate} if
\begin{equation}
\label{definition of non-degeneracy equation}
\mathbf{S}_\mathrm{prin}(x,p)\ne0,\qquad\forall(x,p)\in T^*M\setminus\{0\}.
\end{equation}
\end{definition}

Condition \eqref{definition of non-degeneracy equation}
means that $\mathbf{S}_\mathrm{prin}$ does not vanish as a matrix,
i.e.~for any $(x,p)\in T^*M\setminus\{0\}$ the matrix
$\mathbf{S}_\mathrm{prin}(x,p)$ has at least one nonzero element.
This is the weakest possible non-degeneracy condition.

Further on we work with non-degenerate Hermitian first order sesquilinear forms.

\section{Statement of the problem}
\label{Statement of the problem}

\subsection{General linear classification}
\label{General linear classification}
Consider a smooth matrix-function
\begin{equation}
\label{matrix-function GL}
R:M\to GL(n,\mathbb{C}).
\end{equation}
Given a sesquilinear form
\eqref{definition of sequilinear 2}
we can now define another sesquilinear form
\begin{equation}
\label{sesquilinear form with tilde equation 1}
\tilde S(u,v):=S(Ru,Rv).
\end{equation}
We interpret this new sesquilinear form as a different representation of
our original sesquilinear form. What we did is we changed, fibrewise, the basis
in our $\mathbb{C}^n$-bundle over $M$ using the gauge transformation $R$.

The explicit formula for $\tilde S(u,v)$ reads
\begin{equation*}
\label{sesquilinear form with tilde equation 2}
\tilde S(u,v)=\int_M
\left[
-\frac i2\,
u^*\,\tilde{\mathbf{E}}^\alpha\,v_{x^\alpha}
+\frac i2
u_{x^\alpha}^*\,\tilde{\mathbf{E}}^\alpha\,v
+
u^*\,\tilde{\mathbf{F}}\,v
\right]dx\,,
\end{equation*}
where
\begin{equation*}
\label{sesquilinear form with tilde equation 3}
\tilde{\mathbf{E}}^\alpha=R^*\,\mathbf{E}^\alpha R,
\qquad
\tilde{\mathbf{F}}=R^*\,\mathbf{F}R
+\frac i2
\left[
R^*_{x^\alpha}\,\mathbf{E}^\alpha R
-
R^*\,\mathbf{E}^\alpha R_{x^\alpha}
\right].
\end{equation*}
The corresponding full symbol is
\begin{equation}
\label{sesquilinear form with tilde equation 4}
\tilde{\mathbf{S}}_\mathrm{full}
=
R^*\,\mathbf{S}_\mathrm{full}\,R
+\frac i2
\left[
R^*_{x^\alpha}(\mathbf{S}_\mathrm{full})_{p_\alpha}R
-
R^*(\mathbf{S}_\mathrm{full})_{p_\alpha}R_{x^\alpha}
\right].
\end{equation}

Our goal is to perform the above argument the other way round, solving, effectively,
an `inverse problem'.
Namely, suppose we are given two full symbols,
$\mathbf{S}_\mathrm{full}(x,p)$ and $\tilde{\mathbf{S}}_\mathrm{full}(x,p)$.
Do they describe the same sesquilinear form?
In order to deal with this question rigorously we introduce the following definition.

\begin{definition}
\label{definition of GL equivalent sesquilinear forms}
We say that
two full symbols
$\mathbf{S}_\mathrm{full}(x,p)$ and $\tilde{\mathbf{S}}_\mathrm{full}(x,p)$
are $GL$-\emph{equivalent} if
there exists a smooth matrix-function \eqref{matrix-function GL}
such that
\eqref{sesquilinear form with tilde equation 4}
is satisfied.
\end{definition}

\subsection{Special linear classification}
\label{Special linear classification}

We will also deal with the problem of equivalence of symbols
in a more restrictive, special linear setting.

\begin{definition}
\label{definition of SL equivalent sesquilinear forms}
We say that
two full symbols
$\mathbf{S}_\mathrm{full}(x,p)$ and $\tilde{\mathbf{S}}_\mathrm{full}(x,p)$
are $SL$-\emph{equivalent} if
there exists a smooth matrix-function
\begin{equation}
\label{matrix-function SL}
R:M\to SL(n,\mathbb{C})
\end{equation}
such that
\eqref{sesquilinear form with tilde equation 4}
is satisfied.
\end{definition}

We now explain the motivation for Definition
\ref{definition of SL equivalent sesquilinear forms}.



Suppose that we have an additional structure in our mathematical model,
a complex-valued volume form,
namely,
a non-vanishing map 
\[
\operatorname{vol}:M \to {\largewedge}^{n,0} (\mathbb{C}^n),
\qquad
\operatorname{vol}(x)=c(x)\,dz^1\wedge\ldots\wedge dz^n,
\]
where $c(x)$ is some prescribed smooth non-vanishing complex scalar field.

The transformation $u\to Ru$,
where $R$ is a matrix-function \eqref{matrix-function GL},
turns $\operatorname{vol}$ into the complex-valued volume form
$
\widetilde{\operatorname{vol}}(x)=\tilde c(x)\,dz^1\wedge\ldots\wedge dz^n
$
with $\tilde c(x)=c(x)\,\det R(x)$.

As in the previous subsection, we consider the `inverse' problem
which now involves both the sesquilinear form
and the complex-valued volume form.
Namely, consider two symbols
$\mathbf{S}_\mathrm{full}(x,p)$ and $\tilde{\mathbf{S}}_\mathrm{full}(x,p)$
and two non-vanishing scalar fields
$c(x)$ and $\tilde c(x)$.
Does there exist a smooth matrix-function
\eqref{matrix-function GL} which turns
$(\mathbf{S}_\mathrm{full},c)$
into
$(\tilde{\mathbf{S}}_\mathrm{full},\tilde c)$\,?

One way of addressing the above question is as follows.
Choose an arbitrary smooth matrix-function $Q:M\to GL(n,\mathbb{C})$
such that $\det Q(x)=c(x)/\tilde c(x)$
(for example, one can take
\linebreak
$Q(x)=\operatorname{diag}\,(c(x)/\tilde c(x),1,\ldots,1)$\,)
and view the sesquilinear form $\tilde{\mathbf{S}}(Qu,Qv)$ as the `new'
sesquilinear form $\tilde{\mathbf{S}}$.
The two complex-valued volume forms now have the same representation.
After this we can only apply
$SL(n,\mathbb{C})$-transformations \eqref{matrix-function SL}
to establish whether the two sesquilinear forms
$\mathbf{S}_\mathrm{full}(x,p)$ and $\tilde{\mathbf{S}}_\mathrm{full}(x,p)$
are equivalent,
because we do not want to change the complex-valued volume form.
This reduces the problem to checking whether the symbols
are $SL$-equivalent in the sense of
Definition~\ref{definition of SL equivalent sesquilinear forms}.

Alternatively, we can do the argument the other way round.
Take an arbitrary smooth matrix-function $Q:M\to GL(n,\mathbb{C})$
such that $\det Q(x)=\tilde c(x)/c(x)$
and view the sesquilinear form $\mathbf{S}(Qu,Qv)$ as the `new'
sesquilinear form $\mathbf{S}$ etc.


It is easy to see that the outcome of this exercise
does not depend on which way we proceed or which $Q$ we choose.
In group-theoretic language,
this corresponds to the fact that
the group of matrix-functions
\eqref{matrix-function SL}
is a normal subgroup 
of the group of matrix-functions \eqref{matrix-function GL}.
The matrix-function $Q$ picks a particular element in each of
the left cosets (or, equivalently, right cosets)
of
$C^\infty(M,GL(n,\mathbb{C}))/C^\infty(M,SL(n,\mathbb{C}))$.

\subsection{Gauge transformations}
The transformations of symbols (and corresponding sesquilinear forms)
described in this section can be interpreted as gauge transformations.
Our gauge transformations come in two versions,
general linear (subsection \ref{General linear classification})
and special linear (subsection \ref{Special linear classification}).
In the current paper we do not dwell on the physical meaning of our  gauge transformations
and pursue our analysis from a purely mathematical standpoint.

\section{Main results}
\label{Main results}

The main problem addressed in the current paper is to give necessary and sufficient
conditions for a pair of full symbols to be
$GL$-equivalent or $SL$-equivalent.
Our explicit non-canonical approach will eventually produce a full classification
of equivalence classes of sesquilinear forms for the special case
\begin{equation}
\label{m equals 4 n equals 2}
m=4,\qquad n=2,
\end{equation}
i.e.~the case when we are dealing with a pair of complex-valued scalar fields
over a 4-manifold.

Under the assumption \eqref{m equals 4 n equals 2} we have the
following two theorems, which are our main results.

\begin{theorem}
\label{main theorem GL}
Let $M$ be a connected 4-manifold without boundary and let $S$ and $\widetilde{S}$ be non-degenerate sesquilinear forms acting on sections of the trivial $\mathbb{C}^2$-bundle over $M$, see Definition~\ref{definition of non-degeneracy}. Then the corresponding full symbols
$\mathbf{S}_\mathrm{full}(x,p)$ and $\,\tilde{\mathbf{S}}_\mathrm{full}(x,p)$
are $GL$-\emph{equivalent} if and only if
\begin{enumerate}[(i)]
\itemsep0em
\item
the metrics encoded within these symbols belong to the same conformal class,
\item
the electromagnetic covector potentials encoded within these symbols
belong to the same cohomology class in $H^1_{\mathrm{dR}}(M)$,
\item
their topological charges are the same,
\item
their temporal charges are the same and
\item
they have the same 2-torsion $\spin^c$ structure.
\end{enumerate}
\end{theorem}

\begin{theorem}
\label{main theorem SL}
Let $M$ be a connected 4-manifold without boundary and let $S$ and $\widetilde{S}$ be non-degenerate sesquilinear forms acting on sections of the trivial $\mathbb{C}^2$-bundle over $M$, see Definition~\ref{definition of non-degeneracy}. Then the corresponding full symbols
$\mathbf{S}_\mathrm{full}(x,p)$ and $\,\tilde{\mathbf{S}}_\mathrm{full}(x,p)$
are $SL$-\emph{equivalent} if and only if
\begin{enumerate}[(i)]
\itemsep0em
\item
the metrics encoded within these symbols are the same,
\item
the electromagnetic covector potentials encoded within these symbols are the same,
\item
their topological charges are the same,
\item
their temporal charges are the same and
\item
they have the same spin structure.
\end{enumerate}
\end{theorem}

The geometric and topological objects appearing in
(i)--(v) in Theorems \ref{main theorem GL} and \ref{main theorem SL}
will be introduced in
Section~\ref{Invariant objects encoded within sesquilinear forms}
and examined further in Section~\ref{Transition from analysis to topology}.
The proof of the above theorems will be given in Section~\ref{Proofs of main theorems}.

The construction presented in
Sections~\ref{Invariant objects encoded within sesquilinear forms}--\ref{Proofs of main theorems}
is not straightforward and comes in several steps
which combine techniques from differential geometry, algebraic topology
and analysis of partial differential equations.

\begin{remark}
The assumption \eqref{m equals 4 n equals 2} may look quite restrictive at first glance.
However, there are reasons for restricting the dimension of the manifold and number of
scalar fields --- reasons to do with the existence or non-existence of an inner product
compatible with gauge transformations.
See Section~\ref{Sesquilinear forms vs linear operators} and last paragraph of Appendix~\ref{The concepts of principal and subprincipal symbol} for further details.
\end{remark}

\section{Invariant objects encoded within sesquilinear forms}
\label{Invariant objects encoded within sesquilinear forms}

\subsection{Geometric objects}
\label{Geometric objects}

Let us first explain why the case
\eqref{m equals 4 n equals 2} is special.

We start by observing that having the weaker constraint
\begin{equation}
\label{m equals n squared}
m=n^2
\end{equation}
already brings about important geometric consequences.
Namely, under the condition \eqref{m equals n squared}
a manifold $M$ admits a non-degenerate Hermitian first order sesquilinear form
if and only if it is parallelizable.
The proof of this statement retraces that of
\cite[Lemma 1.2]{jmp2017}.

Further on we assume that our manifold $M$ is parallelizable.
Without this assumption we would not have any
non-degenerate Hermitian first order sesquilinear forms
to work with.

Setting $n=2$ and $m=4$ has even more profound geometric consequences.
Namely, observe that the determinant of the principal symbol is a
quadratic form in momentum $p$:
\begin{equation}
\label{definition of metric density}
\det\mathbf{S}_\mathrm{prin}(x,p)=
-\mathbf{g}^{\alpha\beta}(x)\,p_\alpha p_\beta\,,
\end{equation}
where $\mathbf{g}^{\alpha\beta}(x)$ is a real symmetric
$4\times4$
matrix-function with values in 2-densities.
More precisely, $\mathbf{g}$ is a rank two symmetric tensor density
of weight two.

The quadratic form
$\mathbf{g}^{\alpha\beta}$
has Lorentzian signature, i.e.~it has three positive eigenvalues and one negative
eigenvalue,
see \cite[Lemma 2.1]{nongeometric}.
This implies, in particular, that
\begin{equation*}
\label{metric density is non-degenerate}
\det\mathbf{g}^{\alpha\beta}(x)<0,
\qquad\forall x\in M.
\end{equation*}

Put
\begin{equation}
\label{initial definition of Lorentzian density}
\rho(x):=(-\det\mathbf{g}^{\mu\nu}(x))^{1/6}.
\end{equation}
The quantity \eqref{initial definition of Lorentzian density}
is a density. This observation allows us to define the Lorentzian metric
\begin{equation}
\label{definition of metric}
g^{\alpha\beta}(x):=
(\rho(x))^{-2}\ \mathbf{g}^{\alpha\beta}(x).
\end{equation}
Of course, formula
\eqref{initial definition of Lorentzian density}
can now be rewritten in more familiar form as
\begin{equation*}
\label{more familiar definition of Lorentzian density}
\rho(x)=(-\det g_{\mu\nu}(x))^{1/2}.
\end{equation*}

We see that the case
\eqref{m equals 4 n equals 2}
is special in that there is a Lorentzian metric encoded
within our sesquilinear form.
This Lorentzian metric $g$ is defined by the
explicit formulae
\eqref{definition of metric},
\eqref{initial definition of Lorentzian density},
\eqref{definition of metric density}.

Let $g^{\alpha\beta}$ be the contravariant metric
tensor encoded within the sesquilinear for $S$.
Then the contravariant metric
tensor $\tilde g^{\alpha\beta}$ encoded within the sesquilinear form $\tilde S$
defined by \eqref{sesquilinear form with tilde equation 1} is
\begin{equation}
\label{conformal scaling of metric under gauge transformations}
\tilde g^{\alpha\beta}=|\det R|^{-2/3}\,g^{\alpha\beta}.
\end{equation}
We see that the metric transforms conformally under
the action of $R$ as in \eqref{matrix-function GL}.
In particular, it is invariant under \eqref{matrix-function SL}.


The second geometric object encoded within our
sesquilinear form is the electromagnetic covector potential.
In order to single it out
we first introduce the concept of covariant sub\-principal symbol
\begin{equation}
\label{covariant subprincipal symbol}
\mathbf{S}_\mathrm{csub}:=
\mathbf{S}_\mathrm{sub}
+\frac i{16}\,
\mathbf{g}_{\alpha\beta}
\{
\mathbf{S}_\mathrm{prin}
,
\operatorname{adj}\mathbf{S}_\mathrm{prin}
,
\mathbf{S}_\mathrm{prin}
\}_{p_\alpha p_\beta},
\end{equation}
where $\mathbf{g}_{\alpha\beta}$ is the inverse of $\mathbf{g}^{\alpha\beta}$,
\begin{equation*}
\label{definition of generalised Poisson bracket}
\{F,G,H\}:=F_{x^\alpha}GH_{p_\alpha}-F_{p_\alpha}GH_{x^\alpha}
\end{equation*}
is the generalised Poisson bracket on matrix-functions
and $\,\operatorname{adj}\,$ is the operator of matrix adjugation
\begin{equation*}
\label{definition of adjugation}
F=\begin{pmatrix}a&b\\ c&d\end{pmatrix}
\mapsto
\begin{pmatrix}d&-b\\-c&a\end{pmatrix}
=:\operatorname{adj}F.
\end{equation*}
We define the electromagnetic covector potential $A$ as
the --- unique, due to \eqref{definition of non-degeneracy equation} --- real-valued solution of
\begin{equation}
\label{definition of electromagnetic covector potential}
\mathbf{S}_\mathrm{csub}(x)=\mathbf{S}_\mathrm{prin}(x,A(x)).
\end{equation}
Note that \eqref{definition of electromagnetic covector potential}
is a system of four linear algebraic equations for the
four components of~$A$.

\begin{lemma}
The electromagnetic covector potential is given explicitly by the
following formula
\begin{equation}
\label{explicit formula for A}
A_\alpha
=-\frac12\,\mathbf{g}_{\alpha\beta}
\operatorname{tr}
\left(
(\mathbf{S}_\mathrm{csub})
\,
(\operatorname{adj}(\mathbf{S}_\mathrm{prin})_{p_\beta})
\right).
\end{equation}
\end{lemma}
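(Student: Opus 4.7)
The plan is to extract $A_\alpha$ from the defining equation \eqref{definition of electromagnetic covector potential} by a trace-and-contract manoeuvre. Writing $\mathbf{E}^\alpha:=(\mathbf{S}_\mathrm{prin})_{p_\alpha}$, the defining relation reads simply $\mathbf{S}_\mathrm{csub}=\mathbf{E}^\alpha A_\alpha$. Right-multiplying by $\operatorname{adj}\mathbf{E}^\beta$ and taking the trace yields
\[
\operatorname{tr}\left([\mathbf{S}_\mathrm{csub}]\,[\operatorname{adj}(\mathbf{S}_\mathrm{prin})_{p_\beta}]\right)
=\operatorname{tr}\left(\mathbf{E}^\alpha\operatorname{adj}\mathbf{E}^\beta\right)A_\alpha.
\]
The claimed formula \eqref{explicit formula for A} will then follow by contraction with $-\tfrac12\,\mathbf{g}_{\alpha\beta}$, the inverse of $\mathbf{g}^{\alpha\beta}$, provided one establishes the key algebraic identity
\[
\operatorname{tr}\left(\mathbf{E}^\alpha\operatorname{adj}\mathbf{E}^\beta\right)=-2\,\mathbf{g}^{\alpha\beta}.
\]

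Establishing this identity is the substance of the proof. The argument rests on two features peculiar to the case $n=2$. First, on $2\times 2$ matrices adjugation is linear in the entries, so that $\operatorname{adj}(\mathbf{E}^\beta p_\beta)=(\operatorname{adj}\mathbf{E}^\beta)\,p_\beta$. Second, the fundamental relation $M\operatorname{adj}M=(\det M)\,I$ applied to $M=\mathbf{S}_\mathrm{prin}(x,p)$, together with \eqref{definition of metric density}, gives
\[
\mathbf{E}^\alpha(\operatorname{adj}\mathbf{E}^\beta)\,p_\alpha p_\beta
=-\mathbf{g}^{\alpha\beta}p_\alpha p_\beta\,I
\]
as a polynomial identity in $p$. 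Symmetrising in $\alpha,\beta$ (since $p_\alpha p_\beta$ is symmetric), taking the trace, and invoking the elementary fact that $\operatorname{tr}(F\operatorname{adj}G)=\operatorname{tr}(G\operatorname{adj}F)$ for all $2\times 2$ matrices $F,G$ (this being the polarisation of $\det$, hence automatically symmetric in its two arguments) delivers exactly the displayed identity.

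There is no genuine technical obstacle: the whole proof is a few lines of linear algebra. The only conceptual content lies in choosing $\operatorname{adj}\mathbf{E}^\beta$ as the correct matrix-valued object to contract with, which works precisely because $n=2$ --- for larger $n$ the adjugate is of higher polynomial degree in the matrix entries and no comparable linear identity is available --- reaffirming the special role played throughout the paper by the $2\times 2$ setting.
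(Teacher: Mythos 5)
Your proof is correct and follows essentially the same route as the paper: there one multiplies the defining relation \eqref{definition of electromagnetic covector potential} by $\operatorname{adj}\mathbf{S}_\mathrm{prin}(x,p)$, uses $\det\mathbf{S}_\mathrm{prin}=-\mathbf{g}^{\alpha\beta}p_\alpha p_\beta$ from \eqref{definition of metric density}, differentiates in $p_\beta$, takes the matrix trace and lowers the index, and your key identity $\operatorname{tr}\bigl(\mathbf{E}^\alpha\operatorname{adj}\mathbf{E}^\beta\bigr)=-2\,\mathbf{g}^{\alpha\beta}$ is precisely the $p$-differentiated (polarised) form of that step. If anything, your explicit symmetrisation together with $\operatorname{tr}(F\operatorname{adj}G)=\operatorname{tr}(G\operatorname{adj}F)$ is slightly more careful than the paper's intermediate display, which asserts a full matrix identity that in general is valid only at the level of the trace (or symmetric part), though this does not affect the conclusion.
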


\begin{proof}
In view of \eqref{definition of metric density}, multiplication of both sides of \eqref{definition of electromagnetic covector potential} by $\operatorname{adj}\,(\mathbf{S}_\mathrm{prin}(x,p))$ gives
\begin{equation}\label{lemma A temp 1}
\begin{split}
(\mathbf{S}_\mathrm{csub}(x))
\,
(\operatorname{adj}(\mathbf{S}_\mathrm{prin}(x,p)))
&=
(\mathbf{S}_\mathrm{prin}(x,A(x)))\,(\operatorname{adj}(\mathbf{S}_\mathrm{prin}(x,p))\\
&=\left(- \mathbf{g}^{\mu\nu}(x)\,A_\mu(x)\,p_\nu\right) \mathrm{Id}.
\end{split}
\end{equation}
Differentiating both sides of \eqref{lemma A temp 1} with respect to $p_\beta$, taking the matrix trace and lowering the index with the
($(-2)$-density valued) metric  yields \eqref{explicit formula for A}.
\end{proof}

Formulae
\eqref{definition of metric density},
\eqref{covariant subprincipal symbol}
and
\eqref{definition of electromagnetic covector potential}
tell us that the full symbol is completely
determined by principal symbol and electromagnetic covector potential.

\begin{lemma}
\label{lemma transformation of A}
Let $A$ be the electromagnetic covector potential
encoded within the sesquilinear form $S$.
Then the electromagnetic covector potential
$\tilde A$ encoded within the sesquilinear form $\tilde S$
defined by \eqref{sesquilinear form with tilde equation 1} is
\begin{equation}\label{transformation of A}
\tilde A=A+\frac12\operatorname{grad}(\arg\det R).
\end{equation}
\end{lemma}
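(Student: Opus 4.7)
The plan is to derive the transformation law for $A$ directly from the explicit formula \eqref{explicit formula for A}. Two of its three ingredients transform tensorially. First, \eqref{conformal scaling of metric under gauge transformations} gives $\tilde{\mathbf{g}}_{\alpha\beta} = |\det R|^{-2}\,\mathbf{g}_{\alpha\beta}$. Second, linearity of the adjugate on $2\times 2$ matrices yields $(\operatorname{adj}\mathbf{S}_\mathrm{prin})_{p_\beta} = \operatorname{adj}\mathbf{E}^\beta$; combined with the identities $\operatorname{adj}(AB) = \operatorname{adj}(B)\operatorname{adj}(A)$ and $\operatorname{adj}(R) = (\det R)R^{-1}$, this produces
\[
\operatorname{adj}(\tilde{\mathbf{E}}^\beta) = |\det R|^{2}\,R^{-1}\operatorname{adj}(\mathbf{E}^\beta)(R^*)^{-1}.
\]
The nontrivial piece of work is the transformation of the covariant subprincipal symbol $\mathbf{S}_\mathrm{csub}$.

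To handle $\mathbf{S}_\mathrm{csub}$ I would expand \eqref{covariant subprincipal symbol} using the transformation formulae above together with $\tilde{\mathbf{S}}_\mathrm{sub} = R^*\mathbf{S}_\mathrm{sub}R + \tfrac{i}{2}(R^*_{x^\alpha}\mathbf{E}^\alpha R - R^*\mathbf{E}^\alpha R_{x^\alpha})$, read off from \eqref{sesquilinear form with tilde equation 4}. Leibniz-expanding the generalised Poisson bracket $\{\mathbf{S}_\mathrm{prin},\operatorname{adj}\mathbf{S}_\mathrm{prin},\mathbf{S}_\mathrm{prin}\}_{p_\alpha p_\beta}$ after substitution produces, beyond the naive conjugation $R^*\mathbf{S}_\mathrm{csub}R$, a collection of correction terms linear in $R_{x^\alpha}$, $R^*_{x^\alpha}$ and in the derivatives of $R^{-1}$, $(R^*)^{-1}$. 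Repeated use of the Cayley--Hamilton identity $\mathbf{S}_\mathrm{prin}\operatorname{adj}\mathbf{S}_\mathrm{prin} = -\mathbf{g}^{\alpha\beta}p_\alpha p_\beta\cdot\mathrm{Id}$ and its $p$-derivatives collapses every triple product back to a scalar multiple of $\mathbf{S}_\mathrm{prin}$. After all matrix-valued cancellations, the only residual is a scalar multiple of $R^*\mathbf{E}^\alpha R$ whose coefficient reduces to $\tfrac{1}{4i}\operatorname{tr}\bigl(R^{-1}R_{x^\alpha} - (R^*)^{-1}R^*_{x^\alpha}\bigr)$. Since $\partial_\alpha\log\det R = \operatorname{tr}(R^{-1}R_{x^\alpha})$, this coefficient equals $\tfrac12\operatorname{Im}\operatorname{tr}(R^{-1}R_{x^\alpha}) = \tfrac12\partial_\alpha\arg\det R$, yielding
\[
\tilde{\mathbf{S}}_\mathrm{csub} = R^*\,\mathbf{S}_\mathrm{csub}\,R + \tfrac12\bigl(\partial_\alpha\arg\det R\bigr)\,R^*\mathbf{E}^\alpha R.
\]

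Substituting the three transformation laws into \eqref{explicit formula for A} written for $\tilde A$, the conformal weights $|\det R|^{\pm 2}$ together with the conjugating factors $R, R^{-1}, R^*, (R^*)^{-1}$ cancel under the trace. Using the Cayley--Hamilton trace identity $\operatorname{tr}(\mathbf{E}^\gamma\operatorname{adj}\mathbf{E}^\beta) = -2\mathbf{g}^{\gamma\beta}$, obtained by polarising $\det\mathbf{S}_\mathrm{prin} = -\mathbf{g}^{\gamma\beta}p_\gamma p_\beta$, one arrives at $\tilde A_\alpha = A_\alpha + \tfrac12\partial_\alpha\arg\det R$, which is exactly \eqref{transformation of A}. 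The main obstacle throughout is the algebraic bookkeeping in the transformation of $\mathbf{S}_\mathrm{csub}$: systematically tracking the many $R$-derivative contributions generated by the Leibniz-expanded Poisson bracket and repeatedly collapsing matrix products via Cayley--Hamilton until only the $\arg\det R$ residual survives, a step that depends delicately on $n=2$ through the linearity of $\operatorname{adj}$.
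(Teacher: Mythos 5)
Your plan is correct in substance and in all the formulas it asserts, but it takes a genuinely different route from the paper. The paper does not expand the generalised Poisson bracket at all: it imports from \cite{nongeometric} the ready-made transformation law $\tilde{\mathbf{S}}_\mathrm{csub}=R^*\mathbf{S}_\mathrm{csub}R-\mathbf{Q}-\mathbf{Q}^*$ with an explicit $\mathbf{Q}$, then writes $R=r\,e^{i\varphi}R_1$ with $R_1\in SL(2,\mathbb C)$ and $\varphi=\tfrac12\arg\det R$, and kills the trace-free part of $R_{x}R^{-1}$ by the identity (C.1) of \cite{nongeometric} and the real scalar part by Hermitian symmetry of $\mathbf{Q}+\mathbf{Q}^*$, arriving at exactly your formula $\tilde{\mathbf{S}}_\mathrm{csub}=R^*\mathbf{S}_\mathrm{csub}R+\tfrac12(\partial_\gamma\arg\det R)\,R^*\mathbf{E}^\gamma R$; the lemma then follows from the defining relation \eqref{definition of electromagnetic covector potential} (you could likewise skip your final trace computation with \eqref{explicit formula for A} by invoking uniqueness of the solution of \eqref{definition of electromagnetic covector potential}, though your route via $\operatorname{tr}(\mathbf{E}^\gamma\operatorname{adj}\mathbf{E}^\beta)=-2\mathbf{g}^{\gamma\beta}$ is also fine). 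What your version buys is self-containedness; what the paper buys is brevity and the clean trace-free/real/imaginary decomposition of $R_xR^{-1}$.

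One caveat about the mechanism you name for the hard step. Cayley--Hamilton, $\mathbf{S}_\mathrm{prin}\operatorname{adj}\mathbf{S}_\mathrm{prin}=-\mathbf{g}^{\alpha\beta}p_\alpha p_\beta\,\mathrm{Id}$, together with its polarisations only disposes of the correction terms in which $R^*_{x^\gamma}$ or $R_{x^\gamma}$ stands \emph{outside} the $\mathbf{g}_{\alpha\beta}$-contracted pair: there $\mathbf{g}_{\alpha\beta}\mathbf{E}^\alpha\operatorname{adj}\mathbf{E}^\beta=-4\,\mathrm{Id}$ and these terms cancel exactly against the extra term $\tfrac i2(R^*_{x^\gamma}\mathbf{E}^\gamma R-R^*\mathbf{E}^\gamma R_{x^\gamma})$ coming from $\tilde{\mathbf F}$. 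The genuinely residual terms have $X_\gamma:=R_{x^\gamma}R^{-1}$ (respectively $X_\gamma^*$) \emph{sandwiched} between $\mathbf{E}^\alpha$ and $\operatorname{adj}\mathbf{E}^\beta$, and those are not reachable by Cayley--Hamilton alone; you need the completeness identity $\mathbf{g}_{\alpha\beta}\,\mathbf{E}^\alpha\,W\,\operatorname{adj}\mathbf{E}^\beta=-2\operatorname{tr}(W)\,\mathrm{Id}$ for an arbitrary $2\times2$ matrix $W$ (provable from \eqref{principal symbol via frame 2} and $\sum_{j}s^jWs^j=2\operatorname{tr}(W)\mathrm{Id}-W$; it is precisely the identity the paper cites as (C.1) of \cite{nongeometric}). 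With it the residual becomes $\tfrac i4\bigl(\operatorname{tr}X_\gamma^*-\operatorname{tr}X_\gamma\bigr)R^*\mathbf{E}^\gamma R=\tfrac12(\partial_\gamma\arg\det R)\,R^*\mathbf{E}^\gamma R$, confirming your coefficient. A small slip in your description: the $x$-derivative in the bracket of \eqref{covariant subprincipal symbol} never hits the middle slot $\operatorname{adj}\tilde{\mathbf{S}}_\mathrm{prin}$, so no derivatives of $R^{-1}$, $(R^*)^{-1}$ or $|\det R|^{\pm2}$ actually occur. Once the sandwich identity is supplied, your argument closes and gives the stated result.
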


\begin{proof}
From
\cite[formulae (5.1), (5.2), (D.4)--(D.6)]{nongeometric}
it follows that 
\begin{equation}\label{lemma covector potental proof 1}
\tilde{\mathbf{S}}_\mathrm{csub}=
R^*\mathbf{S}_\mathrm{csub}R-\mathbf{Q}-\mathbf{Q}^*,
\end{equation}
with
\begin{equation}\label{lemma covector potental proof 2}
\mathbf{Q}=-\frac{i}{8}\,\mathbf{g}_{\alpha\beta} \,
R^*\,(\mathbf{S}_\mathrm{prin})_{p_\alpha}\,
R_{x^\gamma}\,R^{-1}\,
(\operatorname{adj}\mathbf{S}_\mathrm{prin})_{p_\beta}\, (\mathbf{S}_\mathrm{prin})_{p^\gamma}\,R.
\end{equation}
The matrix-function $R$ can be written locally as
\begin{equation}
\label{local representaion of R GL}
R(x)=r(x)\,e^{i\varphi(x)}\,R_1(x),
\end{equation}
where $r,\varphi:M\to \mathbb{R}$ and $R_1:M\to SL(2,\mathbb{C})$
are smooth real and matrix-valued functions respectively. In particular, $\varphi(x)=\frac12 \arg \det R(x)$. From \eqref{local representaion of R GL} we obtain
\begin{equation}\label{lemma covector potental proof 3}
R_{x^\gamma}\,R^{-1}=(R_1)_{x^\gamma}\,R_1^{-1}+\dfrac{r_{x^\gamma}}{r}\operatorname{Id}+i\,\varphi_{x^\gamma} \operatorname{Id}.
\end{equation}
The first term on the RHS of \eqref{lemma covector potental proof 3} is trace-free and hence,
by \cite[formula (C.1)]{nongeometric},
it does not contribute to \eqref{lemma covector potental proof 2}. The second term is real, and, when multiplied by $i$, it does not contribute to $ \mathbf{Q}+\mathbf{Q}^*$. Therefore, by substituting \eqref{lemma covector potental proof 3} into \eqref{lemma covector potental proof 2} and, in turn, \eqref{lemma covector potental proof 2} into \eqref{lemma covector potental proof 1}, we obtain
\begin{equation}
\tilde{\mathbf{S}}_\mathrm{csub}=R^*\mathbf{S}_\mathrm{csub}R+ R^*(\mathbf{S}_\mathrm{prin})_{p^\gamma}\,\varphi_{x^\gamma} R,
\end{equation}
from which \eqref{transformation of A} ensues.
\end{proof}

\begin{remark}
The use of the term `electromagnetic covector potential' for
the covector field $A$ is motivated by
the fact that this $A$ is, in our context, a counterpart of what in
gauge theory is a $U(1)$-connection,
see formula \eqref{transformation of A}.
\end{remark}

\subsection{Topological objects}
\label{Topological objects}

As explained in the beginning of the previous subsection,
our manifold $M$ is \emph{a priori} parallelizable, hence orientable.
We specify an orientation on our manifold
and define the topological charge of our sesquilinear form as
\begin{equation}
\label{definition of relative orientation}
c_\mathrm{top}:=
-\frac i2\sqrt{-\det\mathbf{g}_{\alpha\beta}}\,\operatorname{tr}
\bigl(
(\mathbf{S}_\mathrm{prin})_{p_1}
(\mathbf{S}_\mathrm{prin})_{p_2}
(\mathbf{S}_\mathrm{prin})_{p_3}
(\mathbf{S}_\mathrm{prin})_{p_4}
\bigr),
\end{equation}
where $\operatorname{tr}$ stands for the matrix trace.
Straightforward calculations show that
the number $c_\mathrm{top}$
can take only two values, $+1$ or $-1$.
It describes the orientation of the principal symbol
relative to our chosen orientation of local coordinates $x=(x^1,x^2,x^3,x^4)$.

Our Lorentzian 4-manifold $(M,g)$ does, in fact, possess an additional property:
it is automatically time-orientable, i.e.~it admits a timelike (co)vector field.
Indeed, consider the quantity
\[
f_x(p):=\frac1{\rho(x)}\operatorname{tr}\mathbf{S}_\mathrm{prin}(x,p).
\]
We are looking at a linear map
\[
f_x: T^*_x M \to \mathbb{R}, \qquad p\mapsto f_x(p),
\]
depending smoothly on $x\in M$.
Non-degeneracy of our principal symbol implies that
\[
\operatorname{range}f_x\ne\{0\},\qquad\forall x\in M.
\]
By duality the linear map $f_x$ can be represented in terms of a nonvanishing
vector field $t$,
\[
f_x(p)=t(p)=t^\alpha(x)\,p_\alpha\,,
\]
which can be shown to be timelike.

Let us specify a time orientation by choosing
a reference timelike covector field $q$.
We define the temporal charge of our sesquilinear form as
\begin{equation}
\label{definition of temporal charge}
c_\mathrm{tem}:=
\operatorname{sgn}t(q).
\end{equation}
It describes the orientation of the principal symbol
relative to our chosen time orientation.

\begin{definition}
\label{definition of 2-torsion spin c structure}
Consider symbols corresponding to metrics from a given conformal class
and with the same topological and temporal charges.
We define \emph{2-torsion $\spin^c$ structure} to be
the equivalence class of symbols
\begin{equation}
\label{GL equivalence of principal symbols}
\left[\mathbf{S}\right]:=\{\tilde{\mathbf{S}}\ |\ \tilde{\mathbf{S}}_\mathrm{prin}=R^*\mathbf{S}_\mathrm{prin}R,\ R\in C^\infty(M,GL(n,\mathbb{C})) \}\,.
\end{equation}
\end{definition}

\begin{definition}
\label{definition of spin structure}
Consider symbols corresponding to a given metric
and with the same topological and temporal charges.
We define \emph{spin structure} to be
the equivalence class of symbols
\begin{equation}
\label{SL equivalence of principal symbols}
\left[\mathbf{S}\right]:=\{\tilde{\mathbf{S}}\ |\ \tilde{\mathbf{S}}_\mathrm{prin}=R^*\mathbf{S}_\mathrm{prin}R,\ R\in C^\infty(M,SL(n,\mathbb{C})) \}\,.
\end{equation}
\end{definition}

In the above definitions we use topological terminology,
even though the definitions themselves are stated in a purely
analytic fashion.
A rigorous justification for
this is provided in the next section.

\section{Transition from analysis to topology}
\label{Transition from analysis to topology}

The aim of this section is to perform an analysis of
Definitions
\ref{definition of 2-torsion spin c structure}
and
\ref{definition of spin structure},
so as to show that these
analytic definitions are equivalent to standard topological ones.
We will establish this equivalence by rewriting
the principal symbol of a sesquilinear form in a way that
is better suited for revealing topological content, see formula
\eqref{principal symbol via frame 2}
below.

\subsection{Framings and their equivalence}
Let $M$ be an oriented time-oriented Lorentzian $4$-manifold. By a \emph{frame} at a point $x \in M$ we mean a positively oriented and positively time-oriented orthonormal, in the Lorentzian sense, frame $e_j$, $j=1,2,3,4$, in the tangent space $T_x M$:
\[
\det e_j{}^\alpha>0,
\qquad
q(e^4)>0,
\]
\begin{equation*}
\label{Lorentzian orthonormality}
g_{\alpha\beta}\,e_j{}^\alpha e_k{}^\beta
=
\begin{cases}
\phantom{+}0\quad\text{if}\quad j\ne k,\\
+1\quad\text{if}\quad j=k\ne4,
\\
-1\quad\text{if}\quad j=k=4.
\end{cases}
\end{equation*}
Here each vector $e_j$ has coordinate components
$e_j{}^\alpha$, $\alpha=1,2,3,4$.
By a \emph{framing} of $M$ we mean a choice of frame at every point $x \in M$ depending smoothly on the point.
Of course, the contravariant metric tensor is expressed via the framing as
\begin{equation*}
\label{Lorentzian metric via framing}
g^{\alpha\beta}
=e_1{}^\alpha e_1{}^\beta
+e_2{}^\alpha e_2{}^\beta
+e_3{}^\alpha e_3{}^\beta
-e_4{}^\alpha e_4{}^\beta
\end{equation*}
and the Lorentzian density is expressed via the framing as
$\rho=(\det e_j{}^\alpha)^{-1}$.

Let 
\begin{equation*}
\label{standard basis}
s^1=s_1=
\begin{pmatrix}
0&1\\
1&0
\end{pmatrix},
\ \ s^2=s_2=
\begin{pmatrix}
0&-i\\
i&0
\end{pmatrix},
\ \ s^3=s_3=
\begin{pmatrix}
1&0\\
0&-1
\end{pmatrix},
\ \ s^4=-s_4=
\begin{pmatrix}
1&0\\
0&1
\end{pmatrix}
\end{equation*}
be the standard basis in the real vector space of $2\times 2$ Hermitian matrices. Then the principal symbols of sesquilinear forms
with $c_\mathrm{top}=c_\mathrm{temp}=+1$
are in one-to-one correspondence with framings. This correspondence is realised explicitly by the formula
\begin{equation}
\label{principal symbol via frame 2}
\mathbf{S}_\mathrm{prin}(x,p)=\rho(x)\,s^j\, e_j{}^\alpha(x)\,p_\alpha\,.
\end{equation}
The nondegeneracy condition \eqref{definition of non-degeneracy equation} implies that the vector fields $e_j$, $j=1,2,3,4$, are linearly independent. Moreover, they are automatically Lorentz-orthogonal
with respect to the metric encoded within $\mathbf{S}_\mathrm{prin}$,
see \cite[Sections 1 and 2]{jmp2017}.
Thus, the $e_j$, $j=1,2,3,4$, provide a framing. Observe that one can also argue the other way around:
in view of \eqref{principal symbol via frame 2}
a framing completely determines the principal symbol.

The point of the above argument is that instead of working with
an analytic object, a principal symbol, we can work with an
equivalent geometric
object, a framing.

In what follows $SO^+(3,1)$ denotes the identity component of the Lorentz group
and
\linebreak
$CSO^+(3,1)$ denotes its conformal extension. Here `conformal extension' refers to
multi\-plication of matrices from $SO^+(3,1)$ by arbitrary positive factors.
The Lie group $SO^+(3,1)$ is 6-dimensional,
so $CSO^+(3,1)$ is 7-dimensional.
The conformal extension of the Lorentz group is needed because gauge transformations
\eqref{sesquilinear form with tilde equation 1},
\eqref{matrix-function GL}
result in the scaling of the Lorentzian metric encoded within the  principal symbol,
see formula \eqref{conformal scaling of metric under gauge transformations}.

Let us now fix a conformal class of Lorentzian metrics and within this class
choose a pair of principal symbols
$\mathbf{S}_\mathrm{prin}$
and
$\tilde{\mathbf{S}}_\mathrm{prin}$.
Let $e_j$ and $\tilde e_j$ be the corresponding framings.
Then
\begin{equation}
\label{relation between framings}
\tilde e_j=O_j{}^k\,e_k
\end{equation}
for some uniquely defined smooth matrix-function $O:M\to CSO^+(3,1)$.

Suppose now
that there exists a matrix-function $R:M\to GL(2,\mathbb{C})$
such that
$\tilde{\mathbf{S}}_\mathrm{prin}=
R^*\,\mathbf{S}_\mathrm{prin}R$.
A straightforward calculation shows that the
matrix-function $O$
appearing in \eqref{relation between framings}
is expressed via $R$ as
\begin{equation}
\label{spin homomorphism formula 1}
O_j{}^k=\frac12|\det R|^{-4/3}\operatorname{tr}(s_jR^*s^kR).
\end{equation}

It is convenient to define
\begin{equation}
\label{new R 1}
\mathcal{R}:=|\det R|^{2/3}\,R.
\end{equation}
Of course, the above formula can be inverted:
\begin{equation}
\label{new R 2}
R=|\det\mathcal{R}|^{-2/7}\,\mathcal{R}.
\end{equation}
The advantage of working with the matrix-function
\begin{equation*}
\label{new matrix-function GL}
\mathcal{R}:M\to GL(2,\mathbb{C})
\end{equation*}
rather than the original matrix-function
\eqref{matrix-function GL}
is that formula
\eqref{spin homomorphism formula 1}
simplifies and reads now
\begin{equation}
\label{spin homomorphism formula 2}
O_j{}^k=\frac12\operatorname{tr}(s_j\mathcal{R}^*s^k\mathcal{R}).
\end{equation}
The switch from $R$ to $\mathcal{R}$ does not affect the topological issues
we are addressing, it just makes formulae simpler.

Observe that
when $\mathcal{R}\in SL(2,\mathbb C)$,
\eqref{spin homomorphism formula 2}
is the standard spin homomorphism formula
which provides a map
\begin{equation}
\label{spin homomorphism formula 3}
\Pi: SL(2,\mathbb C)\longrightarrow SO^+(3,1),\quad \Pi(\mathcal{R}) = O.
\end{equation}
When we allow $\mathcal{R}$ to take values in $GL(2,\mathbb C)$,
formula
\eqref{spin homomorphism formula 2}
gives us a map
\begin{equation}
\label{spin homomorphism formula 4}
\Pi: GL(2,\mathbb C)\longrightarrow CSO^+(3,1),\quad \Pi(\mathcal{R}) = O.
\end{equation}

We are now in a position to rephrase Definitions
\ref{definition of 2-torsion spin c structure}
and
\ref{definition of spin structure}
as follows.

Consider symbols corresponding to metrics from a given conformal class
and with the same topological and temporal charges.
We define \emph{2-torsion $\spin^c$ structure} to be
the equivalence class of symbols,
where two symbols are called equivalent
if the matrix-function $O$ relating them, see \eqref{relation between framings},
can be written in the form \eqref{spin homomorphism formula 2}
for some $\mathcal{R}:M\to GL(2,\mathbb C)$. In other words, the matrix-function
$O: M \longrightarrow CSO^+(3,1)$ admits a factorization
\begin{equation}\label{E:factor}
O: M \overset{\mathcal{R}}{\longrightarrow} GL(2,\mathbb C) \overset{\Pi}{\longrightarrow} CSO^+(3,1).
\end{equation}
If the metric is the same,
we define \emph{spin structure} to be
the equivalence class of symbols,
where two symbols are called equivalent
if the matrix-function $O$ relating them
can be written in the form \eqref{spin homomorphism formula 2}
for some $\mathcal{R}:M\to SL(2,\mathbb C)$.

\begin{remark}
\label{remark about uniqueness of R}
It is easy to see that
in the $GL$ case the matrix-function $\mathcal{R}$, if it exists,
is defined uniquely modulo multiplication by $e^{i\varphi}$,
where $\varphi$ is an arbitrary smooth real-valued scalar function.
In the $SL$ case the matrix-function $\mathcal{R}$, if it exists,
is defined uniquely modulo multiplication by $\pm1$.
\end{remark}

It follows from \cite{jmp2017}
that our definition of spin structure agrees with the accepted topological one\footnote{The map called $\Ad:SL(2,\mathbb{C})\to SO^+(3,1)$ in \cite{jmp2017} should in fact be understood as the spin homomorphism~$\Pi$.}.
In the remainder of this section we establish a similar result for
2-torsion $\spin^c$ structure.

Let us remind the reader that it follows from our assumptions
\eqref{definition of non-degeneracy equation}
and
\eqref{m equals 4 n equals 2}
that $M$ is a Lorentzian manifold which is parallelizable and time-orientable.
In particular, it is spin.
A choice of reference framing on $M$ provides a trivialization of the tangent bundle $TM$ so that any other framing is related to this reference framing by a smooth function $O: M \to CSO^+(3,1)$. Two framings corresponding to functions $O_1$ and $O_2$ are equivalent in the above sense if and only if there exists a smooth function $\mathcal{R}:M \to GL(2,\mathbb C)$ such that
$O_2\,\cdot\,(\Pi \circ\mathcal{R}) = O_1$ as functions $M \to CSO^+(3,1)$.




\subsection{Topological characterization}\label{Topological characterization}
In this section, we will characterize the equivalence relation we used to define the 2-torsion $\spin^c$ structures in purely topological terms. We begin by recalling that the compact subgroups $U(2) \subset GL(2,\mathbb C)$ and $SO(3) \subset CSO^+ (3,1)$ are deformation retracts of the respective non-compact Lie groups compatible with the map \eqref{spin homomorphism formula 4} in the sense that the following diagram commutes
\smallskip
\begin{equation}\label{diagram}
\begin{aligned}
\begin{tikzpicture}
\draw (8,6) node (a) {$U(2)$};
\draw (12,6) node (b) {$GL(2,\mathbb C)$};
\draw (8,3) node (c) {$SO(3)$};
\draw (12,3) node (d) {$CSO^+(3,1)$};
\draw[->](a)--(b);
\draw[->](c)--(d);
\draw[->](a)--(c) node [midway,right](TextNode){$\Ad$};
\draw[->](b)--(d) node [midway,right](TextNode){$\Pi$};
\end{tikzpicture}
\end{aligned}
\end{equation}
Here we used the fact that the restriction of the map \eqref{spin homomorphism formula 4} to the subgroup $U(2)$ coincides with the adjoint map $\Ad: U(2) \longrightarrow SO(3)$. The two vertical arrows in this diagram are principal $U(1)$--bundles, the action being multiplication by a diagonal matrix; see Remark \ref{remark about uniqueness of R}.

\begin{lemma}\label{L:bundles}
The principal $U(1)$--bundles $U(2) \to SO(3)$ and $GL(2,\mathbb C) \to CSO^+ (3,1)$ are non-trivial.
\end{lemma}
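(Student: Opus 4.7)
The plan is to reduce both non-triviality assertions to a single computation in the compact case, then settle it with a fundamental-group argument. Diagram \eqref{diagram} displays the horizontal inclusions $U(2) \hookrightarrow GL(2,\mathbb{C})$ and $SO(3) \hookrightarrow CSO^+(3,1)$ as deformation retracts (by polar decomposition on the left and by the decomposition $CSO^+(3,1) \simeq SO^+(3,1) \times \mathbb{R}_{>0}$ followed by Iwasawa on the right). These inclusions are manifestly equivariant for the $U(1)$--action by diagonal multiplication $e^{i\varphi}\operatorname{Id}$ mentioned in Remark \ref{remark about uniqueness of R}, so the non-compact $U(1)$--bundle is pulled back from the compact one along the homotopy equivalence $SO(3) \simeq CSO^+(3,1)$. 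Since isomorphism classes of principal $U(1)$--bundles over a paracompact base are classified by $H^2(\,\cdot\,;\mathbb{Z})$, and this invariant is a homotopy invariant, it suffices to establish non-triviality of the compact bundle $U(2) \to SO(3)$.

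For this, I would compute $\pi_1$ in two independent ways. On the one hand, the determinant fibration $SU(2) \to U(2) \to U(1)$, combined with $SU(2) \simeq S^3$ being simply connected, yields $\pi_1(U(2)) \cong \mathbb{Z}$. On the other hand, if the bundle $U(2) \to SO(3)$ were trivial, then $U(2)$ would be homeomorphic to $SO(3) \times U(1)$, forcing
\begin{equation*}
\pi_1(U(2)) \;\cong\; \pi_1(SO(3)) \oplus \pi_1(U(1)) \;\cong\; \mathbb{Z}/2 \,\oplus\, \mathbb{Z},
\end{equation*}
which is incompatible with $\pi_1(U(2)) \cong \mathbb{Z}$. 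Equivalently, the long exact sequence of the fibration $U(1) \to U(2) \to SO(3)$, using $\pi_2(SO(3)) = 0$, collapses to a short exact sequence
\begin{equation*}
0 \longrightarrow \mathbb{Z} \longrightarrow \mathbb{Z} \longrightarrow \mathbb{Z}/2 \longrightarrow 0
\end{equation*}
in which the left-hand arrow is necessarily multiplication by $2$; a trivial bundle would split this sequence, which is impossible.

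No serious obstacle is expected, as each ingredient is a standard fact about the topology of classical Lie groups. The only mild care is in verifying that the $U(1)$--actions on the two sides of \eqref{diagram} agree (both being multiplication by the scalar matrix $e^{i\varphi}\operatorname{Id}$), which ensures that the compact and non-compact bundles really are pulled back from one another and thus share their classifying class in $H^2(SO(3);\mathbb{Z}) \cong \mathbb{Z}/2$.
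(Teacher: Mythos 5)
Your proof is correct, but it takes a different route from the paper. The paper argues cohomologically and treats each bundle directly: a section $s$ of, say, $U(2)\to SO(3)$ would make the composition $H^2(SO(3);\mathbb Z)\xrightarrow{\ \Ad^*\ }H^2(U(2);\mathbb Z)\xrightarrow{\ s^*\ }H^2(SO(3);\mathbb Z)$ the identity, which is impossible because $H^2(U(2);\mathbb Z)=\mathbb Z$ is torsion-free while $H^2(SO(3);\mathbb Z)=\mathbb Z/2$; the non-compact bundle is handled by the same argument. You instead first reduce the non-compact case to the compact one, using the $U(1)$-equivariance of the inclusions in Diagram \eqref{diagram} to see that $U(2)\to SO(3)$ is the pullback of $GL(2,\mathbb C)\to CSO^+(3,1)$ along the homotopy equivalence $SO(3)\hookrightarrow CSO^+(3,1)$, so the two bundles have corresponding first Chern classes; then you dispose of the compact bundle by a fundamental-group computation, namely $\pi_1(U(2))\cong\mathbb Z$ is torsion-free, whereas triviality would force $\pi_1(U(2))\cong\pi_1(SO(3))\oplus\pi_1(U(1))\cong\mathbb Z/2\oplus\mathbb Z$ (equivalently, a section would split the exact sequence $0\to\mathbb Z\to\mathbb Z\to\mathbb Z/2\to 0$, which cannot happen since $\mathbb Z/2$ admits no nonzero map to $\mathbb Z$). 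Both arguments are short and standard; yours is arguably more elementary in that it only uses $\pi_1$ of classical groups rather than their second cohomology, at the mild cost of the equivariant-pullback verification, which is genuinely needed and which you correctly flag --- the paper sidesteps it by running the same cohomological argument separately on each bundle (though its identification of $H^2$ of the non-compact groups rests on the same deformation retractions).
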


\begin{proof}
A principal bundle is known to be trivial if and only if it admits a section. Assuming that the bundle $U(2) \to SO(3)$ admits a section $s: SO(3) \to U(2)$, we immediately obtain a contradiction because the composition
\[
\begin{CD}
H^2 (SO(3);\mathbb Z) @> \Ad^* >>  H^2 (U(2);\mathbb Z) @> s^* >> H^2 (SO(3);\mathbb Z) 
\end{CD}
\]

\smallskip\noindent
must be identity while $H^2 (U(2);\mathbb Z) = \mathbb Z$ and $H^2 (SO(3);\mathbb Z) = \mathbb Z/2$. The argument for the other bundle is similar.
\end{proof}

We will be mostly interested in the bundle $GL(2,\mathbb C) \to CSO^+ (3,1)$. Given a map $f: M \to CSO^+(3,1)$, associate with it the cohomology class $\mathcal O (f) = f^*(1) \in H^2 (M;\mathbb Z)$, where $1 \in H^2 (CSO^+ (3,1); \mathbb Z) = \mathbb Z/2$ is the generator. Note that $\mathcal O(f)$ is an element of order at most two in $H^2 (M;\mathbb Z)$; in particular, it automatically vanishes whenever the group $H^2 (M;\mathbb Z)$ has no 2-torsion.

\begin{proposition}
A map $f: M \to CSO^+ (3,1)$ admits a factorization \eqref{E:factor} if and only if $\mathcal O (f) = 0$.
\end{proposition}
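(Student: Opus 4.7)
The plan is to recast the factorization question as the triviality of a pullback principal $U(1)$-bundle over $M$ and then apply the classification of such bundles by $H^2(-;\mathbb{Z})$.

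First, I would observe, as recorded in the discussion following diagram \eqref{diagram}, that $\Pi: GL(2,\mathbb{C}) \to CSO^+(3,1)$ is itself a principal $U(1)$-bundle. A factorization \eqref{E:factor} of $f$ is, tautologically, the same datum as a global section of the pullback bundle $f^*\Pi \to M$: a section corresponds to a choice of $\mathcal{R}(x) \in \Pi^{-1}(f(x))$ depending smoothly on $x$. Since a principal bundle admits a global section if and only if it is trivial, $f$ admits a factorization \eqref{E:factor} if and only if $f^*\Pi$ is a trivial principal $U(1)$-bundle over~$M$.

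Second, principal $U(1)$-bundles over a paracompact space (such as our smooth manifold $M$) are classified up to isomorphism by their first Chern class in $H^2(-;\mathbb{Z})$; equivalently, by the homotopy class of the classifying map into $BU(1) = K(\mathbb{Z},2)$. Hence $\Pi$ itself corresponds to some class $c \in H^2(CSO^+(3,1);\mathbb{Z})$. The deformation retraction $SO(3) \hookrightarrow CSO^+(3,1)$ appearing in \eqref{diagram} identifies $H^2(CSO^+(3,1);\mathbb{Z}) \cong H^2(SO(3);\mathbb{Z}) = \mathbb{Z}/2$, and Lemma~\ref{L:bundles} tells us $\Pi$ is non-trivial, which forces $c$ to be the unique nonzero element, i.e.\ the generator~$1$.

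Finally, by naturality of the first Chern class, the pullback bundle $f^*\Pi$ has characteristic class $f^*(c) = f^*(1) = \mathcal{O}(f) \in H^2(M;\mathbb{Z})$. Combining with the first step, $f$ admits a factorization \eqref{E:factor} if and only if $\mathcal{O}(f) = 0$, as asserted. The only substantive ingredient is the identification of $c$ with the generator of $\mathbb{Z}/2$, and this is handed to us for free by Lemma~\ref{L:bundles} together with the computation $H^2(CSO^+(3,1);\mathbb{Z}) = \mathbb{Z}/2$; no obstruction-theoretic bookkeeping beyond this is needed, since in degree two the classification of principal circle bundles is already complete.
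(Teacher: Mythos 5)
Your argument is correct and is essentially the paper's own proof: the paper likewise forms the pullback of the principal $U(1)$-bundle $\Pi: GL(2,\mathbb{C}) \to CSO^+(3,1)$ along $f$ (there written explicitly as $E(f)\subset M\times GL(2,\mathbb{C})$), identifies a factorization \eqref{E:factor} with a section, reduces triviality to the vanishing of the first Chern class, and uses Lemma~\ref{L:bundles} together with $H^2(CSO^+(3,1);\mathbb{Z})=\mathbb{Z}/2$ and naturality to conclude $c_1(f^*\Pi)=f^*(1)=\mathcal{O}(f)$. No substantive difference.
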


\begin{proof}
We begin by constructing, for a given map $f: M \to CSO^+ (3,1)$, the pull back principal bundle 
\smallskip
\begin {equation*}
\begin{tikzpicture}
\draw (8,6) node (a) {$E(f)$};
\draw (12,6) node (b) {$GL(2,\mathbb C)$};
\draw (8,3) node (c) {$M$};
\draw (12,3) node (d) {$CSO^+(3,1)$};
\draw[->](a)--(b);
\draw[->](c)--(d) node [midway, above](TextNode){$f$};
\draw[->](a)--(c) node [midway,right](TextNode){$\pi$};
\draw[->](b)--(d) node [midway,right](TextNode){$\Pi$};
\end{tikzpicture}
\end {equation*}
where $E(f) = \{\,(x,p)\, |\, f (x) = \Pi (p)\,\}\subset M \times GL(2,\mathbb C)$ and the maps $\pi: E(f) \to M$ and $E(f) \to GL(2,\mathbb C)$ are projections onto the respective factors. It is well known (and can be checked by comparing the definitions) that $f: M \to CSO^+ (3,1)$ admits a factorization \eqref{E:factor} if and only if the bundle $\pi: E(f) \to M$ admits a section. Since $\pi: E(f) \to M$ is a principal bundle it admits a section if and only if it is trivial. The latter happens if and only if the first Chern class $c_1 (E(f)) \in H^2 (M; \mathbb Z)$ vanishes. Since $c_1$ is natural with respect to pull backs, $c_1(E(f))$ is the pul back via $f^*: H^2 (CSO^+(3,1);\mathbb Z) \to H^2 (M;\mathbb Z)$ of the first Chern class of the bundle $GL(2,\mathbb C) \to CSO^+(3,1)$. According to Lemma \ref{L:bundles}, the latter bundle is non-trivial, hence its first Chern class must be the generator $1 \in H^2 (CSO^+ (3,1);\mathbb Z) = \mathbb Z/2$ and $c_1 (E(f)) = f^*(1) = \mathcal O(f)$.
\end{proof}

\begin{proposition}
Every element of $H^2 (M;\mathbb Z)$ of order two can be realized as $\mathcal O(f)$ for some map $f: M \to CSO^+ (3,1)$.
\end{proposition}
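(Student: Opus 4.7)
The plan is to reduce the question to realising mod-$2$ cohomology classes by maps into $\mathbb{RP}^\infty$, and then to deform such a map into the subspace $\mathbb{RP}^3 = SO(3) \hookrightarrow CSO^+(3,1)$ (using that $SO(3)$ is a deformation retract of $CSO^+(3,1)$). Given $\alpha \in H^2(M;\mathbb{Z})$ with $2\alpha = 0$, the long exact sequence associated with the coefficient sequence $0 \to \mathbb{Z} \xrightarrow{2} \mathbb{Z} \to \mathbb{Z}/2 \to 0$ identifies the $2$-torsion subgroup of $H^2(M;\mathbb{Z})$ with the image of the Bockstein $\beta\colon H^1(M;\mathbb{Z}/2) \to H^2(M;\mathbb{Z})$. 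I would therefore pick $u \in H^1(M;\mathbb{Z}/2)$ with $\beta(u) = \alpha$ and represent $u$ by a classifying map $g\colon M \to K(\mathbb{Z}/2,1) = \mathbb{RP}^\infty$, so that $g^*(w_1) = u$, where $w_1$ generates $H^1(\mathbb{RP}^\infty;\mathbb{Z}/2)$.

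The main step is to homotope $g$ into $\mathbb{RP}^3 \subset \mathbb{RP}^\infty$. The homotopy fibre of this inclusion coincides with the universal cover $S^3 \to \mathbb{RP}^3$, so the obstructions to such a deformation live in $H^{k+1}(M;\pi_k(S^3))$. These vanish for $k \leq 2$ since $S^3$ is $2$-connected, and for $k \geq 4$ since $\dim M = 4$; the only potential obstruction is therefore the primary one, in $H^4(M;\pi_3(S^3)) = H^4(M;\mathbb{Z})$. A direct computation would identify this obstruction with $g^*(k_3)$, where $k_3 \in H^4(\mathbb{RP}^\infty;\mathbb{Z}) = \mathbb{Z}/2$ is the leading $k$-invariant of the Postnikov tower of $\mathbb{RP}^3$. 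Since $H^\ast(\mathbb{RP}^\infty;\mathbb{Z}) = \mathbb{Z}[\beta(w_1)]/(2\beta(w_1))$ and $k_3$ must be non-trivial---were it zero, the Postnikov section $P_3(\mathbb{RP}^3)$ would split as $K(\mathbb{Z}/2,1)\times K(\mathbb{Z},3)$, which is inconsistent with $H^4(\mathbb{RP}^3;\mathbb{Z}) = 0$---one concludes $k_3 = \beta(w_1)^2$, so the obstruction equals $\alpha^2 \in H^4(M;\mathbb{Z})$.

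To close the argument I would establish that $\alpha^2 = 0$ under our standing hypotheses. The class $\alpha^2$ is $2$-torsion because $2\alpha = 0$ forces $2\alpha^2 = 0$. On the other hand, the existence of a non-degenerate Hermitian first-order sesquilinear form makes $M$ parallelizable and hence orientable; thus $H^4(M;\mathbb{Z}) = \mathbb{Z}$ by Poincar\'e duality when $M$ is closed, whereas $H^4(M;\mathbb{Z}) = 0$ when $M$ is non-compact (because a non-compact $n$-manifold has the homotopy type of a CW complex of dimension at most $n-1$). In either case $H^4(M;\mathbb{Z})$ carries no non-trivial $2$-torsion, so $\alpha^2 = 0$, the obstruction vanishes, and $g$ can be deformed to a map $f\colon M \to \mathbb{RP}^3 \cong SO(3) \hookrightarrow CSO^+(3,1)$. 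Naturality of the Bockstein then gives $\mathcal{O}(f) = f^*(\beta(w_1)) = \beta(f^*w_1) = \beta(u) = \alpha$, as required.

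The main technical obstacle is the identification of the primary obstruction with $\alpha^2$, which requires setting up the Postnikov tower of $\mathbb{RP}^3$ and pinning down its leading $k$-invariant inside $H^4(\mathbb{RP}^\infty;\mathbb{Z})$; once this is done, the vanishing of $\alpha^2$ is a soft torsion argument using only the orientability of $M$ and the structure of $H^4$ for parallelizable $4$-manifolds.
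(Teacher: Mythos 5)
Your proof is correct, and it shares the paper's overall skeleton --- reduce to the image of the Bockstein, realize classes in $H^1(M;\mathbb Z/2)$ by maps into a space homotopy equivalent to $\RP^3$ sitting inside $CSO^+(3,1)$, and kill the only possible obstruction, which lives in $H^4(M;\mathbb Z)$, using orientability of $M$ --- but you execute the key middle step by a genuinely different mechanism. The paper never touches Postnikov towers: it uses the fibration sequence of the double cover $SL(2,\mathbb C)\to SO^+(3,1)$, so that surjectivity of $[M,SO^+(3,1)]\to H^1(M;\mathbb Z/2)$ amounts to the composite $M\to\RP^\infty\to BSL(2,\mathbb C)\simeq\HP^\infty$ being null-homotopic; cellular approximation (the $5$-skeleton of $\HP^\infty$ is $\HP^1=S^4$) reduces this to a map $M\to\RP^4\to S^4$, and the Hopf theorem together with the factorization of $H^4(S^4;\mathbb Z)\to H^4(M;\mathbb Z)$ through $H^4(\RP^4;\mathbb Z)=\mathbb Z/2$ finishes the job, since $H^4(M;\mathbb Z)$ is free abelian. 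You instead compress the classifying map $M\to\RP^\infty$ into $\RP^3$ by Moore--Postnikov obstruction theory for the fibration with fibre $S^3$, identify the single obstruction with the pullback of the $k$-invariant $\beta(w_1)^2$, i.e.\ with the cup square $\alpha^2$, and kill it because $2\alpha^2=0$ while $H^4(M;\mathbb Z)$ (which is $\mathbb Z$ or $0$) has no $2$-torsion. The two computations of the obstruction agree --- the generator of $H^4(\HP^\infty;\mathbb Z)$ pulls back to $\beta(w_1)^2$ on $\RP^\infty$ --- so what your route buys is a more explicit statement (the obstruction is precisely $\alpha^2$), at the price of pinning down the leading $k$-invariant of $\RP^3$, whereas the paper's route avoids any $k$-invariant computation. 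Two points you gloss over are harmless but deserve a line each: the coefficient system $\pi_3(S^3)$ is untwisted because the deck transformation of $S^3\to\RP^3$ has degree $+1$, so ordinary coefficients suffice; and the vanishing of $H^4(M;\mathbb Z)$ for non-compact $M$ (your CW-dimension argument, or Poincar\'e duality with locally finite homology) rests on exactly the same input --- smoothness and orientability --- that the paper invokes when it asserts $H^4(M;\mathbb Z)$ is free abelian.
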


\begin{proof}
Let us consider the short exact sequence $0 \longrightarrow \mathbb Z \overset{\cdot 2}\longrightarrow \mathbb Z \longrightarrow \mathbb Z/2 \longrightarrow 0$. The associated long exact sequence
\[
\begin{CD}
\ldots @>>> H^1 (M;\mathbb Z/2) @> \partial >> H^2 (M;\mathbb Z) @> \cdot 2 >> H^2 (M;\mathbb Z) @>>> H^2 (M;\mathbb Z/2) @>>> \ldots \\
\end{CD}
\]

\medskip\noindent
implies that every element $b \in H^2 (M;\mathbb Z)$ of order two belongs to the image of the Bockstein homomorphism $\partial: H^1 (M;\mathbb Z/2) \longrightarrow H^2 (M;\mathbb Z)$. We will show that every cohomology class $a \in H^1(M; \mathbb Z/2)$ is of the form $a = f^*(1)$ for some $f: M \to SO^+(3,1)$ and $1 \in H^1(SO^+(3,1);\mathbb Z/2) = \mathbb Z/2$. The result will then follow from the commutative diagram 
\begin{equation}\label{E:bockstein}
\begin{CD}
H^1 (SO^+(3,1);\mathbb Z/2) @> \partial >> H^2 (SO^+(3,1);\mathbb Z) \\
@VV f^* V @VV f^* V \\
H^1 (M;\mathbb Z/2) @> \partial >> H^2 (M;\mathbb Z) \\
\end{CD}
\end{equation}
whose upper row is an isomorphism, and the fact that $SO^+(3,1) \subset CSO^+(3,1)$ is a deformation retract. 

Let us consider the double covering $SL(2,\mathbb C) \to SO^+(3,1)$ given by the spin homomorphism \eqref{spin homomorphism formula 3} and its associated fibration sequence (see, for instance, \cite[Lemma 8.23]{kirk_davis})
\[
\begin{CD}
\mathbb Z/2 @>>> SL(2,\mathbb C) @>>> SO^+(3,1) @>>> K(\mathbb Z/2,1) @>>> BSL(2,\mathbb C),
\end{CD}
\]
where $K(\mathbb Z/2,1)$ is the Eilenberg--MacLane space and $BSL(2,\mathbb C)$ the classifying space of the Lie group $SL(2,\mathbb C)$. It gives rise to the exact sequence of homotopy sets (see \cite[Theorem 6.29]{kirk_davis})
\[
\begin{CD}
[M,SO^+(3,1)] @>>> H^1(M;\mathbb Z/2) @>>> [M,BSL(2,\mathbb C)]
\end{CD}
\]
using the fact that $H^1(M;\mathbb Z/2) = [M,K(\mathbb Z/2,1)]$. We wish to show that the first map in this sequence is surjective or, equivalently, that the second map is zero. Write $H^1(M;\mathbb Z/2) = [M,\RP^\infty]$ using the homotopy equivalence between $K(\mathbb Z/2,1)$ and the real projective space $\RP^{\infty}$. Also observe that, up to homotopy equivalence, $BSL(2,\mathbb C) = BSU(2) = \HP^{\infty}$, the quaternionic projective space. Then the question becomes whether, for any continuous map $M \to \RP^{\infty}$, the composition $M \to \RP^{\infty} \to \HP^{\infty}$ with the natural inclusion $\RP^{\infty} \to \HP^{\infty}$ is homotopic to zero. Since $\dim M = 4$ and the 5--skeleton of the CW--complex $\HP^{\infty}$ is $\HP^1 = S^4$, the cellular approximation theorem reduces this question to an identical question about the composition $M \to \RP^4 \to S^4$. By the Hopf theorem, a map $M \to S^4$ is homotopic to zero if and only if the induced map $H^4 (S^4;\mathbb Z) \to H^4 (M;\mathbb Z)$ is zero. In our case, this last map splits as the composition 
\[
\begin{CD}
H^4 (S^4;\mathbb Z) @>>> H^4 (\RP^4;\mathbb Z) @>>> H^4 (M;\mathbb Z),
\end{CD}
\]
with $H^4 (\RP^4;\mathbb Z) = \mathbb Z/2$. Since $M$ is orientable, $H^4 (M;\mathbb Z)$ is a free abelian group, hence the second map in this composition must vanish. 

It is worth mentioning that the orientability of $M$ in this argument is essential: in general, realizability of cohomology classes in $H^2 (M;\mathbb Z)$ can be obstructed by the non-trivial quadruple cup-product on $H^1 (M;\mathbb Z/2)$.
\end{proof}

\begin{corollary}\label{C:torsion}
The set of 2-torsion $\spin^c$ structures on $M$ is in a bijective correspondence with the 2-torsion subgroup of $H^2 (M; \mathbb Z)$.
\end{corollary}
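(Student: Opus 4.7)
My plan is to build an explicit map $\Phi$ from the set of 2-torsion $\spin^c$ structures on $M$ to the 2-torsion subgroup $H^2(M;\mathbb Z)[2]$, and to prove it is a bijection by assembling the two preceding propositions. I would fix once and for all a reference symbol $\mathbf{S}_0$ with the prescribed conformal class of metric and topological/temporal charges; any other such symbol $\mathbf{S}$ is then encoded, via \eqref{relation between framings}, by a matrix-function $O_\mathbf{S}:M\to CSO^+(3,1)$ relating its framing to that of $\mathbf{S}_0$. Define $\Phi([\mathbf{S}]) := \mathcal{O}(O_\mathbf{S}) = O_\mathbf{S}^*(1)$. Because the generator $1\in H^2(CSO^+(3,1);\mathbb Z)=\mathbb Z/2$ is itself 2-torsion, the image of $\Phi$ automatically sits inside $H^2(M;\mathbb Z)[2]$.

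To establish both well-definedness and injectivity in one stroke, I would use the group structure on $[M,CSO^+(3,1)]$ under pointwise multiplication, together with the fact that the class $1$ is \emph{primitive} with respect to the H-space structure of $CSO^+(3,1)$, so that $\mathcal{O}$ is a group homomorphism $[M,CSO^+(3,1)]\to H^2(M;\mathbb Z)$. The equivalence relation on symbols unpacks as $\mathbf{S}_1\sim\mathbf{S}_2$ iff $O_{\mathbf{S}_2}O_{\mathbf{S}_1}^{-1}$ factors through $\Pi$, which by the preceding obstruction proposition is the same as $\mathcal{O}(O_{\mathbf{S}_2}O_{\mathbf{S}_1}^{-1})=0$. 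Combined with the homomorphism property, this yields the equivalence $\mathbf{S}_1\sim\mathbf{S}_2 \Longleftrightarrow \mathcal{O}(O_{\mathbf{S}_1})=\mathcal{O}(O_{\mathbf{S}_2})$, which is precisely what is needed.

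Surjectivity is immediate from the preceding realizability proposition: for any $b\in H^2(M;\mathbb Z)[2]$, choose $f:M\to CSO^+(3,1)$ with $\mathcal{O}(f)=b$, and let $f$ act pointwise on the reference framing; this produces a symbol $\mathbf{S}$ with $O_\mathbf{S}=f$, and because $CSO^+(3,1)$ preserves conformal class, orientation and time-orientation, $\mathbf{S}$ has the prescribed data. The one step likely to require genuine work, rather than bookkeeping, is the primitivity of $1\in H^2(CSO^+(3,1);\mathbb Z)$, i.e.\ the homomorphism property of $\mathcal{O}$. I would handle it by noting that $CSO^+(3,1)$ deformation retracts onto $SO(3)$, for which $H^1(SO(3);\mathbb Z)=0$; since Künneth shows that any degree-two integral class on a topological group with vanishing $H^1$ is automatically primitive (the cross-term $H^1\otimes H^1$ vanishes, and the two coordinate inclusions recover the class on each factor), the homomorphism property of $\mathcal{O}$ follows, and the proof closes cleanly.
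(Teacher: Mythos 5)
Your proof is correct and takes essentially the route the paper intends: Corollary~\ref{C:torsion} is stated there without a separate proof, as a direct consequence of the two preceding propositions, which is exactly how you assemble it (obstruction relative to a fixed reference symbol, vanishing of $\mathcal O$ detecting factorization through $\Pi$, realizability giving surjectivity). The one ingredient you make explicit --- primitivity of the generator of $H^2(CSO^+(3,1);\mathbb Z)$, so that $\mathcal O$ is additive and equivalence of two symbols is detected by equality of their obstruction classes --- is indeed needed and is left implicit in the paper; your K\"unneth argument via the retraction to $SO(3)$ and $H^1(SO(3);\mathbb Z)=0$ settles it correctly.
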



\subsection{Differential geometric characterization}\label{spinc}
Our goal in this subsection is to identify the equivalence classes of framings with the 2-torsion $\spin^c$ structures on $M$, whose definition is modelled after that in Riemannian geometry \cite{LM}; see Remark \ref{2-torsion} below. In the special case at hand, when the tangent bundle $TM$ is trivialized via the reference frame, it reads as follows. A 2-torsion $\spin^c$ structure on $M$ is an equivalence class of commutative diagrams
\smallskip
\begin {equation*}
\begin{tikzpicture}
\draw (8,6) node (a) {$M \times GL(2,\mathbb C)$};
\draw (8,3) node (b) {$M \times CSO^+(3,1)$};
\draw (11,4.5) node (c) {$M$};
\draw[->](a)--(b) node [midway,right](TextNode){$\Phi$};
\draw[->](a)--(c) node [midway,above](TextNode){$\pi$};
\draw[->](b)--(c) node [midway,above](TextNode){$\pi$};
\end{tikzpicture}
\end {equation*}
where $\pi$ stands for the projection onto the first factor, and the map $\Phi$ is equivariant in that $\Phi(x,g) = \Phi (x,1)\cdot \Pi(g)$ for all $x \in M$ and $g \in GL(2,\mathbb C)$. Two diagrams as above with the vertical maps $\Phi_1$ and $\Phi_2$ are called equivalent if there is a commutative diagram
\begin {equation*}
\begin{tikzpicture}
\draw (8,6) node (a) {$M \times GL(2,\mathbb C)$};
\draw (12,6) node (b) {$M \times GL(2,\mathbb C)$};
\draw (10,3.5) node (c) {$M \times CSO^+(3,1)$};
\draw[->](a)--(b) node [midway,above](TextNode){$A$};
\draw[->](a)--(c) node [midway,left](TextNode){$\Phi_1\;$};
\draw[->](b)--(c) node [midway,right](TextNode){$\;\;\Phi_2$};
\end{tikzpicture}
\end {equation*}
such that $\pi\circ A = \pi$ and the map $A$ is equivariant in that $A(x,g) = A(x,1)\cdot g$ for all $x \in M$ and $g \in GL(2,\mathbb C)$.

\begin{theorem}
\label{Theorem Lorentzian}
For parallelizable time-orientable Lorentzian 4-manifolds, the equivalence classes of framings as above are in bijective correspondence with the 2-torsion $\spin^c$ structures.
\end{theorem}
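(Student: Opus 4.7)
The plan is to identify both sides of the stated bijection with the same quotient of smooth maps $M\to CSO^+(3,1)$. Concretely, using the fixed reference framing, every framing $\{\tilde e_j\}$ is encoded by a unique smooth map $O:M\to CSO^+(3,1)$ via $\tilde e_j = O_j{}^k e_k$, and the equivalence relation on framings (inherited from the $GL(2,\mathbb C)$-gauge action on principal symbols) has already been shown to read $O_1 \sim O_2$ iff $O_2\cdot(\Pi\circ \mathcal R)=O_1$ for some smooth $\mathcal R:M\to GL(2,\mathbb C)$.

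On the 2-torsion $\spin^c$ side, I would first observe that the requirements $\pi\circ \Phi = \pi$ and $\Phi(x,g)=\Phi(x,1)\cdot \Pi(g)$ together force $\Phi$ to be of the form $\Phi(x,g)=(x,h(x)\Pi(g))$, where $h:M\to CSO^+(3,1)$ is defined by $\Phi(x,1)=(x,h(x))$. Hence equivariant diagrams are in bijection with smooth maps $M\to CSO^+(3,1)$. Next, I would unpack the equivalence of two such diagrams through $A(x,g)=(x,k(x)g)$ for some $k:M\to GL(2,\mathbb C)$: the condition $\Phi_1=\Phi_2\circ A$ evaluated at $g=1$ gives $h_1(x)=h_2(x)\Pi(k(x))$, which is exactly the equivalence of framings spelt out above.

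With these two unpackings in hand, the proof proceeds in three short steps. First, write down the candidate bijection: send the equivalence class of a framing with encoding map $O$ to the equivalence class of the diagram $\Phi(x,g):=(x,O(x)\Pi(g))$, and verify well-definedness from the matching equivalence relations. Second, establish surjectivity: every equivariant $\Phi$ arises from some $h$ as above, and any $h:M\to CSO^+(3,1)$ is realised by the framing $\tilde e_j=h_j{}^k e_k$ applied to the reference frame. Third, establish injectivity by noting that both equivalence relations on $C^\infty(M,CSO^+(3,1))$ amount to quotienting by the subgroup $\Pi\circ C^\infty(M,GL(2,\mathbb C))$.

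The only obstacle is purely bookkeeping: ensuring that the left-versus-right conventions for the $CSO^+(3,1)$-action, the equivariance of $\Phi$, and the equivariance of the intertwining $A$ are used consistently, so that the two equivalence relations genuinely coincide rather than differing by an inverse. Once this is checked, the theorem becomes tautological at the level of equivalence classes of maps $M\to CSO^+(3,1)$, and no further geometric or topological input (beyond the parallelizability and time-orientability assumptions that were already used to guarantee the existence of the encoding $O$) is needed.
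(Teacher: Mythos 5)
Your proposal is correct and is essentially the paper's own argument: the paper likewise unpacks an equivariant $\Phi$ into a single map $\psi:M\to CSO^+(3,1)$ via $\psi(x)=\phi(x,1)$, unpacks the intertwiner $A$ into $\beta:M\to GL(2,\mathbb C)$ via $\beta(x)=\alpha(x,1)$, and observes that the commutativity condition becomes $\psi_2\cdot\Pi(\beta)=\psi_1$, which coincides with the equivalence relation $O_2\cdot(\Pi\circ\mathcal R)=O_1$ already established for framings. Your extra bookkeeping (explicit bijection, well-definedness, surjectivity, injectivity) just spells out the same identification in more detail.
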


\begin{proof}
Using the commutativity of the first diagram, write $\Phi (x,g) = (x,\phi(x,g))$ for some function $\phi: M \times GL(2,\mathbb C) \to CSO^+(3,1)$ and observe that the equivariance condition on $\Phi$ translates into the equation $\phi(x,g) = \phi(x,1)\,\cdot\,\Pi(g)$. Therefore, the map $\Phi$ is uniquely determined by the map $\psi: M \to CSO^+(3,1)$ given by $\psi(x) = \phi(x,1)$. 

Similarly, write $A(x,g) = (x,\alpha(x,g))$ and observe that the equivariance condition on $A$ translates into the equation $\alpha(x,g) = \alpha(x,1)\,\cdot\,g$. Therefore, the map $A$ is uniquely determined by the map $\beta: M \to GL(2,\mathbb C)$ given by $\beta(x) = \alpha(x,1)$. One can easily check that the second commutative diagram then simply means that $\psi_2\,\cdot\,\Pi(\beta) = \psi_1$ as functions $M \to CSO^+(3,1)$.
\end{proof}

Theorem~\ref{Theorem Lorentzian} rigorously shows, in view of \eqref{principal symbol via frame 2}, the equivalence of two definitions of 2-torsion $\spin^c$ structure, the standard topological one and Definition~\ref{definition of 2-torsion spin c structure}. Note that the equivalence we established is not canonical in that it depends on the choice of reference frame.

\begin{remark}\label{2-torsion}
It may be worth explaining the origin of the term `2-torsion $\spin^c$ structure'. Following the analogy with Riemannian geometry, one can define a $\spin^c$ structure on $M$ as the equivalence class of lifts of the principal frame bundle of $M$ to a $GL(2,\mathbb C)$ bundle; even though the frame bundle of $M$ is trivial, it may lift to a non-trivial $GL(2,\mathbb C)$ bundle. Among these lifts is the lift to an $SL(2,\mathbb C)$ bundle $P$ associated with the spin structure on $M$. The bundle $P$ must be trivial for topological reasons: it is classified by its second Chern class $c_2 (P)$, and we know that $4c_2 (P) = -p_1 (TM) = 0 \in H^4 (M; \mathbb Z)$. As in the Riemannian case, one can use $P$ to establish a bijective correspondence between $\spin^c$ structures on $M$ and the group $H^2 (M; \mathbb Z)$. Under this correspondence, the $\spin^c$ structure corresponding to a cohomology class $a \in H^2 (M; \mathbb Z)$ lives in a Hermitian rank-two bundle with the first Chern class $c_1 (P) + 2 a = 2a \in H^2 (M; \mathbb Z)$. Since we restrict ourselves to trivial bundles, the class $2a$ must vanish. This means that $a \in H^2 (M;\mathbb Z)$ is a 2-torsion, hence the name of the corresponding $\spin^c$ structure. 

\end{remark}


\section{Proofs of main theorems}
\label{Proofs of main theorems}

\subsection{Proof of Theorem~\ref{main theorem GL}}
\label{Proof of main theorem GL}

\subsubsection*{Necessity}
Let us first show that conditions (i)--(v) of Theorem \ref{main theorem GL} are necessary.

\begin{enumerate}[(i)]
\item[{}](i)
Formula \eqref{conformal scaling of metric under gauge transformations}
tells us that the conformal class of metrics is preserved under $GL$ trans\-formations,
so condition (i) is necessary.
\item[{}](ii)
Lemma \ref{lemma transformation of A} tells us that condition (ii) is necessary.
\item[{}](iii)-(iv)
In order to deal with conditions (iii) and (iv) we observe that the two charges,
topological \eqref{definition of relative orientation}
and 
temporal \eqref{definition of temporal charge},
can be expressed via the framing as
\[
c_\mathrm{top}=\operatorname{sgn}\det e_j{}^\alpha,
\qquad
c_\mathrm{tem}=
\operatorname{sgn}q(e^4).
\]
We showed in Section~\ref{Transition from analysis to topology} that under
$GL$ transformations the framing stays within the original connected component
of the conformally extended Lorentz group, hence conditions (iii) and (iv) are necessary.
\item[{}](v)
As to the necessity of condition (v), it follows immediately from 
Definition~\ref{definition of 2-torsion spin c structure}.
\end{enumerate}

\subsubsection*{Sufficiency}
Let us now show that conditions (i)--(v) of Theorem \ref{main theorem GL} are sufficient.

We need to find a $GL$ transformation
which turns one full symbol into the other.
As explained in subsection \ref{Geometric objects},
a full symbol is completely
determined by principal symbol and electromagnetic covector potential.
Thus, we need to find a $GL$ transformation
which turns one principal symbol into the other
and one electromagnetic covector potential into the other.

Conditions (i) and (iii)--(v) ensure that we can find a matrix-function \eqref{matrix-function GL}
which turns one principal symbol into the other,
see formula \eqref{GL equivalence of principal symbols}.
Remark \ref{remark about uniqueness of R}
and formulae \eqref{new R 1}, \eqref{new R 2}
tell us that this matrix-function \eqref{matrix-function GL}
is defined uniquely modulo multiplication by $e^{i\varphi}$,
where $\varphi$ is an arbitrary smooth real-valued scalar function.
In view of condition (ii) this function $\varphi$
can be chosen so as to
turn one electromagnetic covector potential into the other.

All in all, we obtain a matrix-function $R$ defined uniquely
modulo multiplication by a constant $c\in\mathbb{C}$, $|c|=1$.
\qed

\subsection{Proof of Theorem~\ref{main theorem SL}}
\label{Proof of main theorem SL}
The proof of Theorem~\ref{main theorem SL}
is similar to that of Theorem~\ref{main theorem GL},
with only two modifications.
\begin{itemize}
\item
$SL$ transformations preserve the metric,
so the requirement is that the two metrics are the same
as opposed to the two metrics being in the same conformal class.
\item
$SL$ transformations preserve the electromagnetic covector potential,
so the requirement is that the two electromagnetic covector potentials are the same
as opposed to the two electromagnetic covector potentials being in the same  cohomology class in $H^1_{\mathrm{dR}}(M)$.

All in all, we obtain a matrix-function $R$ defined uniquely
modulo multiplication by $\pm1$.
\qed
\end{itemize}

\section{The 3-dimensional Riemannian case}
\label{The 3-dimensional Riemannian case}

Let us consider first order sesquilinear forms satisfying the additional assumption
\begin{equation}
\label{principal symbol is trace-free}
\operatorname{tr}\mathbf{S}_\mathrm{prin}(x,p)=0,\qquad\forall(x,p)\in T^*M.
\end{equation}
In this setting it is natural to look at transformations of symbols generated by matrix-functions
\begin{equation}
\label{matrix-function U}
R:M\to U(n)
\end{equation}
or
\begin{equation}
\label{matrix-function SU}
R:M\to SU(n).
\end{equation}
Of course,
$U(n)\subset  GL(n,\mathbb{C})$
and 
$SU(n)\subset  SL(n,\mathbb{C})$,
so
\eqref{matrix-function U}
and
\eqref{matrix-function SU}
are special cases of
\eqref{matrix-function GL}
and
\eqref{matrix-function SL}
respectively.
We are now more restrictive in our choice of matrix-functions $R$
because we want to preserve condition \eqref{principal symbol is trace-free}.

It turns out that for sesquilinear forms with
trace-free principal symbol one can perform
a classification similar to that described in previous
sections. We list the main results below,
skipping detailed proofs as these are modifications
of arguments presented earlier in the paper.

Condition \eqref{m equals n squared}
is now replaced by
\begin{equation}
\label{m equals n squared minus 1}
m=n^2-1.
\end{equation}
Under the condition \eqref{m equals n squared minus 1}
a manifold $M$ admits a non-degenerate Hermitian first order sesqui\-linear form
with trace-free principal symbol
if and only if it is parallelizable.
So further on we assume that our manifold is parallelizable.

In this section we deal with the special case
\begin{equation}
\label{m equals 3 n equals 2}
m=3,\qquad n=2,
\end{equation}
compare with \eqref{m equals 4 n equals 2}.
It is known \cite{Stiefel,Kirby}
that a 3-manifold is parallelizable if and only if it is orientable.
Therefore, orientability is our only topological restriction on $M$.

It is easy to see that under the assumption
\eqref{principal symbol is trace-free}
the non-degeneracy condition
\eqref{definition of non-degeneracy equation}
is equivalent to the condition
\begin{equation}
\label{definition of ellipticity}
\det\mathbf{S}_\mathrm{prin}(x,p)\ne0,\qquad\forall(x,p)\in T^*M\setminus\{0\}.
\end{equation}
But \eqref{definition of ellipticity} is the standard ellipticity condition.
Thus, in this section we work with
formally self-adjoint elliptic
first order sesquilinear forms $S$ with trace-free principal symbols
which act on sections of the trivial
$\mathbb{C}^2$-bundle over a connected smooth oriented 3-manifold $M$ without boundary.

We define $\mathbf{g}^{\alpha\beta}(x)$ via \eqref{definition of metric density}.
It is easy to see that the quadratic form
$\mathbf{g}^{\alpha\beta}$
is positive definite.
This implies, in particular, that
\begin{equation*}
\label{metric density is non-degenerate dimension 3}
\det\mathbf{g}^{\alpha\beta}(x)>0,
\qquad \forall x\in M.
\end{equation*}
Put
\begin{equation}
\label{initial definition of Riemannian density}
\rho(x):=(\det\mathbf{g}^{\mu\nu}(x))^{1/4}.
\end{equation}
The quantity \eqref{initial definition of Riemannian density}
is a density.
This observation allows us to define the Riemannian metric
$g^{\alpha\beta}(x):=
(\rho(x))^{-2}\ \mathbf{g}^{\alpha\beta}(x)$.
Of course, formula
\eqref{initial definition of Riemannian density}
can now be rewritten in more familiar form as
$\rho(x)=(\det g_{\mu\nu}(x))^{1/2}\,$.
And it is easy to see that our metric tensor
is invariant under transformations
\eqref{sesquilinear form with tilde equation 1},
\eqref{matrix-function U}.

We define the covariant subprincipal symbol in accordance with formula
\eqref{covariant subprincipal symbol}.
The magnetic covector potential $A=(A_1,A_2,A_3)$
and electric potential $A_4$ are defined as the solution of
\begin{equation*}
\label{definition of magnetic covector potential}
\mathbf{S}_\mathrm{csub}(x)=\mathbf{S}_\mathrm{prin}(x,A(x))+A_4\,\mathrm{Id}\,,
\end{equation*}
compare with \eqref{definition of electromagnetic covector potential}.
For the magnetic potential we still have the explicit formula
\eqref{explicit formula for A}
and for the electric potential we have
\begin{equation*}
\label{explicit formula for A4}
A_4
=\frac12\,
\operatorname{tr}
\mathbf{S}_\mathrm{csub}\,.
\end{equation*}

The full symbol
is completely
determined by principal symbol, magnetic covector potential and electric potential.
The electric potential is invariant under transformations
\eqref{sesquilinear form with tilde equation 1},
\eqref{matrix-function U},
whereas the magnetic covector potential transforms in accordance with formula
\eqref{transformation of A}.

We specify an orientation on our manifold
and define the topological charge of our sesqui\-linear form as
\begin{equation}
\label{definition of relative orientation dimension 3}
c_\mathrm{top}:=
-\frac i2\sqrt{\det\mathbf{g}_{\alpha\beta}}\,\operatorname{tr}
\bigl(
(\mathbf{S}_\mathrm{prin})_{p_1}
(\mathbf{S}_\mathrm{prin})_{p_2}
(\mathbf{S}_\mathrm{prin})_{p_3}
\bigr)
=
\operatorname{sgn}\det e_j{}^\alpha,
\end{equation}
compare with \eqref{definition of relative orientation}.

\begin{definition}
\label{definition of spin c structure dimension 3}
Consider symbols corresponding to a given metric
and with the same topological charge.
We define \emph{2-torsion $\spin^c$ structure} 
to be the equivalence class of symbols
\begin{equation}
\label{U equivalence of principal symbols}
\left[\mathbf{S}\right]=\{\tilde{\mathbf{S}}\ |\ \tilde{\mathbf{S}}_\mathrm{prin}=R^*\mathbf{S}_\mathrm{prin}R,\ R\in C^\infty(M,U(2)) \}\,.
\end{equation}
\end{definition}

\begin{definition}
\label{definition of spin structure dimension 3}
Consider symbols corresponding to a given metric
and with the same topological charge.
We define \emph{spin structure} to be the equivalence class of symbols
\begin{equation}
\label{SU equivalence of principal symbols}
\left[\mathbf{S}\right]=\{\tilde{\mathbf{S}}\ |\ \tilde{\mathbf{S}}_\mathrm{prin}=R^*\mathbf{S}_\mathrm{prin}R,\ R\in C^\infty(M,SU(2)) \}\,.
\end{equation}
\end{definition}

Our analytic definition of 2-torsion $\spin^c$ structure in dimension three, Definition~\ref{definition of spin c structure dimension 3}, is equivalent to the standard topological one. This follows by the argument of Section~\ref{Topological characterization} and Section~\ref{spinc} once the map $GL(2,\mathbb C) \to CSO^+(3,1)$ is replaced by the map $U(2) \to SO(3)$. Our analytic definition of spin structure in dimension three, Definition~\ref{definition of spin structure dimension 3}, is also equivalent to the standard topological one, which follows from
\cite{jmp2017} with the help of Diagram \ref{diagram}. 

We define $U$-equivalence and $SU$-equivalence
of symbols as in Definition~\ref{definition of GL equivalent sesquilinear forms},
replacing
\eqref{matrix-function GL}
by
\eqref{matrix-function U}
and
\eqref{matrix-function SU}
respectively.

We have the following analogues of
Theorems \ref{main theorem GL} and \ref{main theorem SL}.

\begin{theorem}
\label{main theorem U}
Two full symbols
$\mathbf{S}_\mathrm{full}(x,p)$ and $\,\tilde{\mathbf{S}}_\mathrm{full}(x,p)$
are $U$-\emph{equivalent} if and only if
\begin{enumerate}[(i)]
\itemsep0em
\item
the metrics encoded within these symbols are the same,
\item
the electric potentials encoded within these symbols are the same,
\item
the magnetic covector potentials encoded within these symbols
belong to the same cohomo\-logy class in $H^1_{\mathrm{dR}}(M)$,
\item
their topological charges are the same and
\item
they have the same 2-torsion $\spin^c$ structure.
\end{enumerate}
\end{theorem}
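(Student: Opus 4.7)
The plan is to follow the proof of Theorem \ref{main theorem GL} mutatis mutandis, using the three-dimensional invariants developed in Section \ref{The 3-dimensional Riemannian case}. The key structural observation is that in the present trace-free Riemannian setting the full symbol is reconstructed from three pieces of data: the principal symbol, the magnetic covector potential $A$, and the electric potential $A_4$. Hence I need a matrix-function $R : M \to U(2)$ that transforms all three correctly.

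Necessity will be handled condition by condition. For (i) the metric is invariant under $U(2)$-transformations, as noted after \eqref{initial definition of Riemannian density}. For (ii) the electric potential $A_4$ is likewise invariant. For (iii) the three-dimensional analogue of Lemma \ref{lemma transformation of A} yields $\tilde A = A + \tfrac12 \operatorname{grad}(\arg\det R)$, so $\tilde A - A$ is exact, hence $A$ and $\tilde A$ represent the same class in $H^1_{\mathrm{dR}}(M)$. For (iv) the identity $c_\mathrm{top} = \operatorname{sgn}\det e_j{}^\alpha$ in \eqref{definition of relative orientation dimension 3}, combined with the fact that the spin homomorphism lands in $SO(3)$ (which is connected and has determinant one), shows the topological charge is preserved. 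Condition (v) is a restatement of Definition \ref{definition of spin c structure dimension 3}.

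For sufficiency, I would reconstruct $R$ in two stages. First, conditions (i), (iv) and (v) guarantee that the two principal symbols are related by some $R : M \to U(2)$ with $\tilde{\mathbf{S}}_\mathrm{prin} = R^* \mathbf{S}_\mathrm{prin} R$, since by hypothesis they represent the same equivalence class in \eqref{U equivalence of principal symbols}. The three-dimensional analogue of Remark \ref{remark about uniqueness of R}, based on the principal $U(1)$-bundle $U(2) \to SO(3)$ from Lemma \ref{L:bundles}, tells me that this $R$ is unique up to multiplication by a factor $e^{i\varphi}$ with $\varphi \in C^\infty(M,\mathbb R)$. Second, exploit this gauge freedom to absorb condition (iii): replacing $R$ by $e^{i\varphi} R$ alters the magnetic potential by $\operatorname{grad}\varphi$, and since $\tilde A - A$ is exact by hypothesis, a suitable global $\varphi$ exists. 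Invariance of $A_4$ under $U$-transformations together with (ii) then yields the matching of electric potentials automatically, so the full symbols agree.

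The main obstacle, insofar as there is one, is justifying that the analytic notion of 2-torsion $\spin^c$ structure in dimension three given in Definition \ref{definition of spin c structure dimension 3} agrees with the standard topological one, so that conditions (iv) and (v) genuinely encode the obstruction to finding $R$ at the level of principal symbols. This is resolved by observing that the arguments of Sections \ref{Topological characterization} and \ref{spinc} transfer with $GL(2,\mathbb C) \to CSO^+(3,1)$ replaced by the compact principal $U(1)$-bundle $U(2) \to SO(3)$; the necessary non-triviality was already established in Lemma \ref{L:bundles}, and the Bockstein argument runs identically since $M$ is a parallelizable orientable 3-manifold. This also explains why the uniqueness in the first step of sufficiency is modulo $e^{i\varphi}$ rather than modulo a unit complex constant as in the $GL$ case: the fiber $U(1)$ acts globally, providing precisely the gauge freedom needed to match the magnetic covector potentials.
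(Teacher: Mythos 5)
Your proposal is correct and follows essentially the same route as the paper: the paper itself proves Theorem~\ref{main theorem U} only by indicating that it is a modification of the proofs of Theorems~\ref{main theorem GL} and~\ref{main theorem SL}, with the metric and electric potential invariant under \eqref{matrix-function U}, the magnetic potential transforming by \eqref{transformation of A}, and the topological input supplied by replacing $GL(2,\mathbb C)\to CSO^+(3,1)$ with $U(2)\to SO(3)$ in Sections~\ref{Topological characterization} and~\ref{spinc}. Your two-stage reconstruction of $R$ (matching principal symbols via (i), (iv), (v), then using the residual $e^{i\varphi}$ freedom to match the magnetic potentials, with the electric potentials matching automatically) is exactly the intended adaptation of the argument in Section~\ref{Proof of main theorem GL}.
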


\begin{theorem}
\label{main theorem SU}
Two full symbols
$\mathbf{S}_\mathrm{full}(x,p)$ and $\,\tilde{\mathbf{S}}_\mathrm{full}(x,p)$
are $SU$-\emph{equivalent} if and only if
\begin{enumerate}[(i)]
\itemsep0em
\item
the metrics encoded within these symbols are the same,
\item
the electric potentials encoded within these symbols are the same,
\item
the magnetic covector potentials encoded within these symbols are the same,
\item
their topological charges are the same and
\item
they have the same spin structure.
\end{enumerate}
\end{theorem}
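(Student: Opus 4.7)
The plan is to mirror the proof of Theorem~\ref{main theorem SL} from Section~\ref{Proof of main theorem SL}, with $SU(2)$ playing the role of $SL(2,\mathbb{C})$ and with the additional electric potential taken into account. In this trace-free 3-dimensional setting the full symbol is determined by three pieces of data, namely the principal symbol, the magnetic covector potential, and the electric potential, so the argument is to match each piece in turn.

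\emph{Necessity.} I would verify conditions (i)--(v) individually. For (i), the Riemannian metric is $U$-invariant (as observed just after the definition $g^{\alpha\beta}=\rho^{-2}\mathbf{g}^{\alpha\beta}$ in Section~\ref{The 3-dimensional Riemannian case}), and $SU(2)\subset U(2)$. For (ii), the electric potential $A_4=\tfrac12\operatorname{tr}\mathbf{S}_\mathrm{csub}$ is $U$-invariant for the same reason. For (iii), the analogue of Lemma~\ref{lemma transformation of A} yields $\tilde A=A+\tfrac12\operatorname{grad}(\arg\det R)$; since $\det R\equiv 1$ when $R\in SU(2)$, the gradient term vanishes and $\tilde A=A$ pointwise, not merely cohomologically. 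For (iv), a smooth map $R:M\to SU(2)$ takes values in a connected group and so cannot change the sign of $\det e_j{}^\alpha$ in \eqref{definition of relative orientation dimension 3}. Finally (v) is built into Definition~\ref{definition of spin structure dimension 3}.

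\emph{Sufficiency.} Conditions (i), (iv), and (v) together assert that the two principal symbols lie in a common equivalence class \eqref{SU equivalence of principal symbols}, so there exists $R\in C^\infty(M,SU(2))$ with $\tilde{\mathbf{S}}_\mathrm{prin}=R^*\mathbf{S}_\mathrm{prin}R$. The $SU$-analogue of Remark~\ref{remark about uniqueness of R} says this $R$ is unique up to multiplication by a smooth scalar with values in the center of $SU(2)$, i.e.~up to $\pm 1$; in particular no residual gauge freedom remains to adjust the remaining invariants. However, the necessity analysis shows that any such $R$ automatically preserves both the electric potential and the magnetic covector potential exactly, so conditions (ii) and (iii) guarantee that the transformed full symbol coincides with $\tilde{\mathbf{S}}_\mathrm{full}$. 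The main obstacle I anticipate is bookkeeping rather than conceptual: one must check that the transformation formulae \eqref{lemma covector potental proof 1}--\eqref{lemma covector potental proof 2} for $\mathbf{S}_\mathrm{csub}$, together with the trace decomposition into magnetic and electric parts, carry over verbatim to the 3-dimensional trace-free setting. Because $\det R\equiv 1$ collapses the local factorization \eqref{local representaion of R GL} to its $SU(2)$ component, the argument turns out to be strictly simpler than in the $SL$ case: there is no exact piece of $A$ to absorb, and existence of $R$ from (i), (iv), (v) is all one needs.
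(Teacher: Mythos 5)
Your proposal is correct and follows essentially the same route as the paper, which states that the proofs of the 3-dimensional theorems are obtained by modifying the arguments of Theorems~\ref{main theorem GL} and~\ref{main theorem SL}: necessity from the invariance/transformation properties of metric, potentials and charges, and sufficiency by using (i), (iv), (v) to produce $R\in C^\infty(M,SU(2))$ matching the principal symbols, with the potentials then matching automatically since $\det R\equiv1$. Your only slight imprecision is calling this ``strictly simpler than the $SL$ case''--- the $SL$ argument likewise has no exact piece of $A$ to absorb, so the two are essentially identical apart from the extra electric potential invariant.
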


\subsection{Explicit examples}

Concluding this section, we examine two explicit examples.
The first one illustrates how topological obstructions may arise
when classifying symbols in accordance with \eqref{U equivalence of principal symbols}.
The second demonstrates the difference between spin and $\spin^c$.

\subsubsection{The Lie group $SO(3)$}

Let $M=SO(3)$. We claim that $SO(3)$ has more than one 2-torsion $\spin^c$ structure. This follows from Corollary \ref{C:torsion} and the non-vanishing of the group $H^2(SO(3);\mathbb Z)$ but can also be seen directly as follows. With reference to Section~\ref{Topological characterization}, consider the identity map
\[
\mathrm{Id}: M \to SO(3).
\]
The map $\mathrm{Id}$ does not lift to a map $SO(3)\to U(2)$, namely, there does not exist a map $s:SO(3)\to U(2)$ such that the diagram
\begin {equation*}
\begin{tikzpicture}
\draw (12,6) node (a) {$U(2)$};
\draw (8,3)  node (b) {$SO(3)$};
\draw (12,3) node (c) {$SO(3)$};
\draw[dashed,->](b)--(a) node [midway,above](TextNode){$s$};
\draw[->](a)--(c) node [midway,right](TextNode){$\pi$};
\draw[->](b)--(c) node [midway,above](TextNode){$\operatorname{Id}$};
\end{tikzpicture}
\end {equation*}
commutes. A cohomological argument can be found in the proof of Lemma~\ref{L:bundles}. Another way to see this is as follows. Let us restrict ourselves to $SU(2)$ matrices with zero trace. These matrices form a sphere $S^2 \subset SU(2)$, which can also be viewed as the conjugacy class of $\operatorname{diag}(i,-i)\in SU(2)$. Explicitly, the matrices in $S^2$ are of the form 
\[
A = \begin{pmatrix}
ia & b + ic \\ -b + ic & -ia
\end{pmatrix},
\]
where $a$, $b$, and $c$ are real numbers such that $a^2 + b^2 + c^2 = 1$. The adjoint representation sends matrices $A$ and $-A \in S^2$ to the same matrix, giving rise to the double covering $S^2 \to \mathbb R \rm P^2$ of the real projective plane. We shall show that the bundle $U(2) \to SO(3)$ does not admit a section even over the subset $\mathbb R \rm P^2 \subset SO(3)$. The issue one encounters with finding such a section is adjusting for the signs of $SU(2)$ matrices in $S^2$ mapping to the same matrix in $\mathbb R \rm P^2$.  To make this adjustment, we need to find a continuous function $h: S^2 \to U(1)$ such that $h(-x) = -h(x)$, where $-x$ stands for the antipodal map on the sphere. If such a function $h$ existed, its composition with the standard inclusion $U(1) \to \mathbb R^2$ would give rise to a function $f: S^2 \to \mathbb R^2$ with the property that $f(-x) = -f(x)$. However, such a function does not exist by the Borsuk--Ulam theorem \cite[Theorem~1.10]{hatcher}: the Borsuk--Ulam theorem states that, for any continuous function $f: S^2 \to \mathbb R^2$, there exists $x \in S^2$ such that $f(-x) = f(x)$. Combined with $f(-x) = -f(x)$, this means that $f(x) = 0$ for some $x$, which contradicts the fact that the image of $f$ belongs to the unit circle.

In fact, one can show that $SO(3)$ has precisely two distinct 2-torsion $\spin^c$ structures
and precisely two distinct spin structures because $H^2(SO(3); \mathbb Z) = \mathbb Z/2$ and $H^1 (SO(3); \mathbb Z/2) = \mathbb Z/2$. In this particular case, $\spin^c$ and spin structures are matched via the Bockstein isomorphism $H^1 (SO(3);\mathbb Z/2) \to H^2 (SO(3);\mathbb Z)$, cf.\ Diagram~\eqref{E:bockstein}.

\subsubsection{The 3-torus}

Let $M=\mathbb{T}^3$ be the 3-dimensional torus
parameterised by $\operatorname{mod}2\pi$ coordinates $x^\alpha$, $\alpha=1,2,3$.
Put
\[
\mathbf{S}(x,p)
=
\mathbf{S}_\mathrm{prin}(x,p)
:=
\begin{pmatrix}
p_3&p_1-ip_2\\
p_1+ip_2&-p_3
\end{pmatrix},
\]
\[
\tilde{\mathbf{S}}(x,p)
=
\tilde{\mathbf{S}}_\mathrm{prin}(x,p)
:=
\begin{pmatrix}
p_3&e^{ix^3}(p_1-ip_2)\\
e^{-ix^3}(p_1+ip_2)&-p_3
\end{pmatrix}.
\]
We have 
\[
\det\mathbf{S}_\mathrm{prin}(x,p)=
\det\tilde{\mathbf{S}}_\mathrm{prin}(x,p)=
-\left(
p_1^2+p_2^2+p_3^2
\right),
\]
which means that the metric encoded within
the symbols $\mathbf{S}$ and $\tilde{\mathbf{S}}$
is the same, namely, the Euclidean metric.
Furthermore, the topological charge
\eqref{definition of relative orientation dimension 3}
encoded within
the symbols $\mathbf{S}$ and $\tilde{\mathbf{S}}$
is the same, $+1$.
Do these symbols have the same $\spin^c$ structure?
The answer is yes, because if we take
\[
R(x)
=
\begin{pmatrix}
e^{-ix^3}&0\\
0&1
\end{pmatrix}
\in C^\infty(M,U(2)) 
\]
we get
\begin{equation}
\label{An example: the 3-torus relation between two principal symbols}
\tilde{\mathbf{S}}_\mathrm{prin}=R^*\mathbf{S}_\mathrm{prin}R.
\end{equation}
However, it is easy to see that there does not exist a matrix-function
$R\in C^\infty(M,SU(2))$ which would give \eqref{An example: the 3-torus relation between two principal symbols},
so our two symbols,
$\mathbf{S}$ and $\tilde{\mathbf{S}}$,
have different spin structure.

In fact, it follows from Corollary \ref{C:torsion} that the 3-torus has a unique 2-torsion $\spin^c$ structure because the cohomology group $H^2(\mathbb T^3; \mathbb Z) = \mathbb Z^3$ has no 2-torsion, but it has eight distinct spin structures because the cohomology group $H^1(\mathbb T^3; \mathbb Z/2) = (\mathbb Z/2)^3$ has eight elements.

\section{Sesquilinear forms vs linear operators}
\label{Sesquilinear forms vs linear operators}

Having developed our theory,
we are now in a position to connect
the motivational ideas outlined in the
Introduction with the theory of partial differential equations.

Consider an
Hermitian first order sequilinear form of the type
\eqref{definition of sequilinear 2}
on the infinite-dimensional vector space
$C_0^\infty(M,\mathbb{C}^2)$.

\subsection{Four-dimensional case}

%
In dimension $m=4$, introduce an inner product
\begin{equation}
\label{inner product on manifold}
\langle u,v\rangle
:=\int_M u^*Gv\,\rho\,dx\,,
\end{equation}
where $G$ is some positive definite Hermitian $2\times2$ matrix-function
and $\rho$ is the Lorentzian density defined as in
subsection \ref{Geometric objects}.
Our first order Hermitian sesquilinear form $S$ and inner product
\eqref{inner product on manifold} define a formally self-adjoint
first order linear differential operator $L$.
The problem here is that it is impossible to choose
$G$ so as to have
\[
R^*GR=G,\qquad\forall R\in GL(2,\mathbb{C}),
\]
or even
\[
R^*GR=G,\qquad\forall R\in SL(2,\mathbb{C}),
\]
i.e.~one cannot introduce
an inner product compatible with our gauge transformations.
Hence, in the 4-dimensional case the construction presented
in our paper defines a linear field equation
but not a linear operator.

\subsection{Three-dimensional case}

Working in dimension $m=3$ and within the framework of
Section~\ref{The 3-dimensional Riemannian case}
(see, in particular, formulae
\eqref{principal symbol is trace-free}--\eqref{matrix-function SU}),
introduce the inner product
\begin{equation}
\label{inner product on manifold dimension 3}
\langle u,v\rangle
:=\int_M u^*v\,\rho\,dx\,,
\end{equation}
where $\rho$ is the Riemannian density encoded within our sesquilinear form
in accordance with formulae 
\eqref{definition of metric density} and \eqref{initial definition of Riemannian density}.
Now \eqref{inner product on manifold dimension 3} is
compatible with our gauge transformations.
Hence, in the 3-dimensional case our construction
defines a formally self-adjoint elliptic first order linear
differential operator.

\section{Applications}
\label{Applications}

In dimension $m=4$ a distinguished physically meaningful
sesquilinear form is the so-called \emph{Weyl form}.
It is defined by the condition that the electromagnetic covector
potential is zero. The gauge group is
$SL(2,\mathbb{C})$. One cannot use here the gauge group
$GL(2,\mathbb{C})$ because the
electro\-magnetic covector
potential is not invariant under the action of this group,
see Lemma~\ref{lemma transformation of A}.
The corresponding linear field equation is called
\emph{Weyl's equation},
the accepted mathematical model for the massless neutrino
in curved spacetime.
The condition $A=0$ translates, in physical terms,
into the neutrino having no electric charge and, therefore,
not interacting with the electromagnetic field.

In dimension $m=3$ and under the assumption
\eqref{principal symbol is trace-free}
a distinguished physically meaningful
sesquilinear form is the so-called \emph{massless Dirac form}.
It is defined by the condition that the electric potential
and magnetic covector
potential are both zero.
By analogy with the previous paragraph,
the gauge group here is $SU(2)$ and one cannot use
$U(2)$ because the
magnetic covector
potential is not invariant under the action of the latter.
The corresponding linear differential operator is called
\emph{massless Dirac operator}.
Its spectrum describes the energy levels of a massless neutrino
in curved space.

\section{Acknowledgements}
The authors are grateful to Boris Botvinnik for generously sharing his expertise.

\begin{appendices}

\section{The concepts of principal and subprincipal symbol}
\label{The concepts of principal and subprincipal symbol}

The concepts of principal and subprincipal symbol
are widely used in modern analysis,
however they are traditionally employed for the description of
(pseudo)differential \emph{operators}.
In the main text of our paper
we use these concepts for the description of
\emph{sesquilinear forms}.
We explain below the relation between the two seemingly
different versions of, essentially, the same objects.

A half-density is a spatially varying complex-valued
quantity on $M$ which under changes of local
coordinates transforms as the square root of a density.
Analysts, especially those working in the spectral theory of partial differential operators,
often prefer working with half-densities rather than with scalar functions.

Let $L^{(1/2)}$ be a first order linear differential
operator acting on $n$-columns of half-densities.
In local coordinates this operator reads
\begin{equation}
\label{definition of operator on half-densities}
L^{(1/2)}=-iE^\alpha(x)\frac\partial{\partial x^\alpha}+F(x),
\end{equation}
where $E^\alpha(x)$ and $F(x)$ are some $n\times n$ matrix-functions,
compare with \eqref{definition of sequilinear 2}.
Here the superscript $(1/2)$ indicates that we are dealing with an
operator acting on  half-densities.

We define the principal, subprincipal and full symbols of the operator
\eqref{definition of operator on half-densities}
as
\begin{equation}
\label{definition of principal symbol}
L^{(1/2)}_\mathrm{prin}(x,p)
:=
E^\alpha(x)\,p_\alpha\,,
\end{equation}
\begin{equation}
\label{definition of subprincipal symbol}
L^{(1/2)}_\mathrm{sub}(x)
:=
F(x)+\frac i2\bigl(L^{(1/2)}_\mathrm{prin}\bigr)_{x^\alpha p_\alpha}(x)
=
F(x)+\frac i2(E^\alpha)_{x^\alpha}(x)\,,
\end{equation}
\begin{equation}
\label{definition of full symbol}
L^{(1/2)}_\mathrm{full}(x,p)
:=
L^{(1/2)}_\mathrm{prin}(x,p)
+
L^{(1/2)}_\mathrm{sub}(x)
\end{equation}
respectively.
It is easy to see that the full symbol $L^{(1/2)}_\mathrm{full}$
uniquely determines our first order linear differential operator $L^{(1/2)}$.

The definition of the subprincipal symbol \eqref{definition of subprincipal symbol}
originates from the classical paper \cite{DuiHor} of
J.J.~Duistermaat and L.~H\"ormander: see formula (5.2.8) in that paper.
Unlike \cite{DuiHor}, we work with matrix-valued symbols, but this
does not affect the formal definition of the subprincipal symbol.
The correction term $\frac i2\bigl(L^{(1/2)}_\mathrm{prin}\bigr)_{x^\alpha p_\alpha}$
plays a crucial role in formula \eqref{definition of subprincipal symbol}:
its presence ensures that the subprincipal symbol is invariant under changes of local coordinates.

Our formulae
\eqref{definition of density-valued principal symbol}--\eqref{definition of density-valued full symbol}
are analogues of the standard formulae
\eqref{definition of principal symbol}--\eqref{definition of full symbol}.
The bold script in the former indicates that we are dealing with density-valued quantities.

In order to establish the relation between symbols of sesquilinear forms and symbols of operators,
let us fix a particular positive density $\mu$ and introduce the inner product
\begin{equation}
\label{inner product on manifold with mu}
\langle u,v\rangle
:=\int_M u^*v\,\mu\,dx
\end{equation}
on $n$-columns of scalar fields.
Formulae
\eqref{relation between form and operator},
\eqref{definition of sequilinear 2}
and
\eqref{inner product on manifold with mu}
define a linear operator $L$.

The main result of this appendix is the following lemma.

\begin{lemma}
Conditions
\begin{equation}
\label{relation between operators}
L=\mu^{-1/2}L^{(1/2)}\mu^{1/2}
\end{equation}
and
\begin{equation}
\label{relation between symbols}
\mathbf{S}_\mathrm{full}=\mu L^{(1/2)}_\mathrm{full}
\end{equation}
are equivalent.
\end{lemma}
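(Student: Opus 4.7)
The plan is to compute both sides of each putative equality explicitly in local coordinates and then show that each term matches. First I would unpack $S(u,v)=\langle u,Lv\rangle$ using the canonical form \eqref{definition of sequilinear 2} and \eqref{inner product on manifold with mu}: integrating by parts in the second summand $\tfrac{i}{2}u^*_{x^\alpha}\mathbf{E}^\alpha v$ rewrites $S(u,v)$ as
\[
S(u,v)=\int_M u^{\,*}\!\left[-i\mathbf{E}^\alpha v_{x^\alpha}
-\tfrac{i}{2}(\mathbf{E}^\alpha)_{x^\alpha}\,v+\mathbf{F}\,v\right]dx .
\]
Comparison with $\int_M u^{\,*}(Lv)\,\mu\,dx$ identifies
\[
Lv=\mu^{-1}\!\left[-i\mathbf{E}^\alpha v_{x^\alpha}
-\tfrac{i}{2}(\mathbf{E}^\alpha)_{x^\alpha}\,v+\mathbf{F}\,v\right].
\]

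Next I would compute the conjugated operator. Applying the Leibniz rule to $L^{(1/2)}(\mu^{1/2}v)$ using \eqref{definition of operator on half-densities} and multiplying by $\mu^{-1/2}$ gives
\[
\mu^{-1/2}L^{(1/2)}\mu^{1/2}\,v
=-iE^\alpha v_{x^\alpha}
-\tfrac{i}{2}\,\mu^{-1}E^\alpha\mu_{x^\alpha}\,v+Fv .
\]

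Now I would match the two operators coefficient by coefficient. Equality of the first-order part $-i\mu^{-1}\mathbf{E}^\alpha=-iE^\alpha$ is equivalent to $\mathbf{E}^\alpha=\mu E^\alpha$, while equality of the zeroth-order part, after substituting this relation and expanding $(\mu E^\alpha)_{x^\alpha}=\mu_{x^\alpha}E^\alpha+\mu(E^\alpha)_{x^\alpha}$, reduces to $\mathbf{F}=\mu\bigl[F+\tfrac{i}{2}(E^\alpha)_{x^\alpha}\bigr]$. Reading these two identities against the definitions \eqref{definition of density-valued principal symbol}--\eqref{definition of density-valued full symbol} and \eqref{definition of principal symbol}--\eqref{definition of full symbol} yields exactly $\mathbf{S}_{\mathrm{prin}}=\mu L^{(1/2)}_{\mathrm{prin}}$ and $\mathbf{S}_{\mathrm{sub}}=\mu L^{(1/2)}_{\mathrm{sub}}$, hence \eqref{relation between symbols}. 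The converse is immediate: starting from \eqref{relation between symbols} the same identifications reconstruct the coefficient matrices and run the computation in reverse.

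The only subtle point is bookkeeping for the subprincipal correction: the term $\tfrac{i}{2}(E^\alpha)_{x^\alpha}$ in \eqref{definition of subprincipal symbol} is precisely what is generated by differentiating $\mu^{1/2}$ and $\mathbf{E}^\alpha=\mu E^\alpha$, so the density weight $\mu$ absorbs exactly the discrepancy between acting on half-densities and acting on scalars. Once this matching is observed, the equivalence \eqref{relation between operators}$\Leftrightarrow$\eqref{relation between symbols} is a pair of one-line coefficient comparisons.
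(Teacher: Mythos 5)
Your proposal is correct and follows essentially the same route as the paper's own proof: integrate by parts to read off the local representation of $L$, conjugate $L^{(1/2)}$ by $\mu^{1/2}$, match first- and zeroth-order coefficients to get $\mathbf{E}^\alpha=\mu E^\alpha$ and $\mathbf{F}=\mu\bigl[F+\tfrac i2(E^\alpha)_{x^\alpha}\bigr]$, and identify these with $\mathbf{S}_\mathrm{full}=\mu L^{(1/2)}_\mathrm{full}$ via the symbol definitions. Your closing remark about the half-density correction absorbing the $\mu$-derivative terms is exactly the cancellation the paper exploits.
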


\begin{proof}
Formula \eqref{definition of operator on half-densities} implies
\begin{equation}
\label{a}
\mu^{-1/2}L^{(1/2)}\mu^{1/2}=-iE^\alpha\frac\partial{\partial x^\alpha}+F-\frac i2\,E^\alpha\,(\ln\mu)_{x^\alpha}\,.
\end{equation}
Performing integration by parts, we rewrite
formula \eqref{definition of sequilinear 2} as
\[
S(u,v)=\int_M
u^*
\left[
\frac1\mu
\left(
-i\mathbf{E}^\alpha\frac\partial{\partial x^\alpha}
+
\mathbf{F}
-
\frac i2(\mathbf{E}^\alpha)_{x^\alpha}
\right)
v
\right]\mu\,dx\,,
\]
which gives us the following explicit local representation of the operator $L\,$:
\begin{equation}
\label{b}
L
=
\frac1\mu
\left(
-i\mathbf{E}^\alpha\frac\partial{\partial x^\alpha}
+
\mathbf{F}
-
\frac i2(\mathbf{E}^\alpha)_{x^\alpha}
\right).
\end{equation}
Substituting \eqref{b} and \eqref{a} into \eqref{relation between operators},
we see that the latter reduces to the pair of equations
\begin{equation}
\label{c}
\mathbf{E}^\alpha=\mu E^\alpha,
\end{equation}
\begin{equation}
\label{d}
\mathbf{F}
-
\frac i2(\mathbf{E}^\alpha)_{x^\alpha}
=
\mu
\left(
F-\frac i2\,E^\alpha\,(\ln\mu)_{x^\alpha}
\right).
\end{equation}
Substituting \eqref{c} into \eqref{d}  we rewrite the latter in equivalent form
\begin{equation}
\label{e}
\mathbf{F}
=
\mu
\left(
F+\frac i2(E^\alpha)_{x^\alpha}
\right).
\end{equation}
In view of 
\eqref{definition of density-valued principal symbol}--\eqref{definition of density-valued full symbol}
and
\eqref{definition of principal symbol}--\eqref{definition of full symbol}
conditions \eqref{c} and \eqref{e} are equivalent to
\eqref{relation between symbols}.
\end{proof}

As already pointed out in Section~\ref{Sesquilinear forms vs linear operators},
in the most general setting of arbitrary $m$ (dimension of the manifold),
arbitrary $n$ (number of scalar fields)
and arbitrary sesquilinear form
the introduction of an inner product
of the form \eqref{inner product on manifold with mu} does not make much sense
because this inner product is incompatible with general linear and special linear
gauge transformations.
However, it makes sense in the special case
\eqref{m equals 3 n equals 2},
\eqref{principal symbol is trace-free}
because the inner product
\eqref{inner product on manifold with mu}
is compatible with unitary and special unitary
gauge transformations.
And in this special case it is natural to take $\mu=\rho\,$,
where $\rho$ is the Riemannian density encoded within our sesquilinear form
in accordance with formulae 
\eqref{definition of metric density} and \eqref{initial definition of Riemannian density}.

\end{appendices}

\end{document}